\documentclass[11pt]{amsart}
\usepackage{}
\usepackage{color}
\usepackage{amssymb}
\usepackage{amsmath}
\usepackage{times}

\newcommand{\s}{\sigma}
\newcommand{\la}{\lambda}
\newcommand{\de}{\delta}

\newcommand{\ov}{\overline}

\newtheorem{theorem}{Theorem}[section]
\newtheorem{lemma}[theorem]{Lemma}

 \theoremstyle{definition}

\theoremstyle{remark}

\numberwithin{equation}{section}

\begin{document}

\title[Complex Equations with Sums of Hessian]
{Second order estimates for equations with sums of Hessian operators on Hermitian manifolds}

\author{Weisong Dong}
\address{School of Mathematics, Tianjin University,
	Tianjin, 300354, China}
\email{dr.dong@tju.edu.cn}

\author{Ruijia Zhang}
\address{Department of Mathematics, Sun Yat-sen University, Guangzhou, 510275, China}
\email{zhangrj76@mail.sysu.edu.cn}


\begin{abstract}
	In this paper, we establish an \emph{a priori} second-order estimate for admissible solutions 
    satisfying a dynamic plurisubharmonic condition
    to equations involving sums of Hessian operators on compact Hermitian manifolds. The estimate is derived using a concavity inequality for complex sum-of-Hessian operators.

\emph{Mathematical Subject Classification (2020):} 35B45, 32W50.

\emph{Keywords:} Second order estimates; Hessian equations; Hermitian manifolds.

\end{abstract}

\maketitle

\section{Introduction}

Let $(M, \omega)$ be a compact Hermitian manifold of complex dimension $n \geq 2$.
For any function $u$ on $M$, let $\chi(z,u)$ be a smooth real (1,1) form on $M$
and let $\psi(z, v, u)$ be a smooth positive function defined on
$\left(T^{1,0}(M)\right)^{*} \times \mathbb{R}$.
Suppose $a$ is a smooth $(1,0)$-form on $M$.
Denote
\begin{equation}
	\label{g}
	g
	=\chi(z, u)+\sqrt{-1} a \wedge \overline{\partial} u - \sqrt{-1} \overline{a} \wedge \partial u+\sqrt{-1} \partial \overline{\partial} u.
\end{equation}
Let $m, k,n \in \mathbb{N}$, $m < k \leq n$ and $b =(b_1, \dots, b_m) \in \mathbb{R}^m$. We consider the following equation involving sums of Hessian operators
\begin{equation}\label{eqn}
F ( \lambda ) := \sigma_k^{(n)} (\lambda) + \sum_{s = 1}^{m} b_s \sigma_{k-s}^{(n)} (\lambda ) = \psi(z, Du, u), \;\; \mbox{for}\;\; 1\leq k \leq n,
\end{equation}
where $\lambda = (\lambda_1, \dots, \lambda_n) $ are the eigenvalues of $g$ with respect to
$\omega$ and $D$ denotes the covariant derivative with respect to $\omega$. 
Here $\sigma_k^{(n)}$ denotes the $k$-th elementary symmetric function on $\mathbb{R}^n$,
with the convention that $\sigma_0^{(n)} = 1$ and $\sigma_k^{(n)} = 0$ for $k < 0$ or $k > n$.

The study of fully nonlinear elliptic equations on complex manifolds has made significant progress in recent years, particularly in connection with complex Hessian equations and their geometric applications. Among these, the complex $k$-Hessian equation (corresponding to $m=0$) plays a central role in the theory of fully nonlinear elliptic equations. When $m=0$, the extremal cases are the Laplace equation ($k=1$) and the complex Monge--Amp\`ere equation ($k=n$, cf. \cite{ChY, TY1, TY2, TW, Yau}), both of which have been extensively studied. 
A typical example in which the left-hand side involves two Hessians is the Fu--Yau equation; see \cite{CHZ, F-Y1, F-Y2, P-P-Z3}. In calibrated geometry, the special Lagrangian equation introduced by Harvey and Lawson \cite{HL} can also be written in the form of equation~\eqref{eqn}. Equations involving sums of Hessians arise naturally in the study of mirror symmetry; see \cite{CJY, CY, LYZ}. Another example comes from the study of the $J$-equation by Collins and Sz\'ekelyhidi \cite{CS}.

The G{\aa}rding cone $\Gamma_{k}^n \subset \mathbb{R}^n$, in which the $k$-Hessian equation is elliptic (cf.~\cite{CNS}), is defined by
\[
	\Gamma_{k}^n :=\left\{\lambda \in \mathbb{R}^{n} : \sigma_{i}^{(n)}(\lambda)>0, i=1, \dots, k \right\}.
\]
Let $A^{1,1}(M)$ be the space of smooth real $(1,1)$-forms on $(M, \omega)$. 
For $h \in A^{1,1}(M)$, 
written in local coordinates as $h = \sqrt{-1} h _{i\bar j} d z^i \wedge d z^{\bar j}$,
define
\[
\sigma_{k}^{(n)} (h) := \sigma_{k}^{(n)} (\lambda({h^i}_j) ),
\]
where ${h^{i}}_{j} = \omega^{i\bar{l}} h_{j\bar{l}}$, and $\lambda({h^{i}}_{j})$ denotes the eigenvalues of the corresponding Hermitian matrix. 
For simplicity, we write $\lambda(h)$ for the eigenvalues of $h$ with respect to the Hermitian metric $\omega$.
For a smooth real $(1,1)$-form $h$, we say $h$ is in the cone $\Gamma_{k}^n (M)$ if $\sigma_{i}^{(n)} (h) > 0$ for $i=1, \ldots, k$.
With the above notation, in local coordinates, \eqref{eqn} can be rewritten as follows:
\begin{equation}\label{eqn2}
	F ( g )= F \left(\omega^{i\bar k} (\chi_{j \overline{k}} + u_{j \overline{k}}
	+ a_{j} u_{\overline{k}} + a_{\overline{k}} u_{j}) \right) = \psi(z, D u, u).
\end{equation}

As in \cite{DXZ}, we consider~\eqref{eqn} under the following Real Root Hypothesis:
\begin{enumerate}
	\item[(RR)] \label{RR}
	The polynomial of degree $m$
	\[
	P(t)=t^m + \sum_{s=1}^m (-1)^s b_s t^{m-s}
	\]
	has $m$ real roots $y_i \in \mathbb{R}$, $i=1,\dots,m$.
\end{enumerate}
Denote $y = (y_1, \dots, y_m)$.
We study the problem within the admissible set of \eqref{eqn}, defined by
\[
\Gamma_k^{(n+m)}=\lbrace\lambda \in \mathbb{R}^{n} : (\la, y)\in \Gamma_k^{n+m}\rbrace.
\]
Clearly, $\Gamma_k^{(n+m)}$ is a convex set, and $F$ is elliptic in this set since 
$\Gamma_k^{(n+m)} \times \{y\} \subset \Gamma_k^{n+m}$. 
Moreover, in the set $\Gamma_k^{(n+m)}$,
\begin{align}\label{sl}
	\sigma_l^{(n+m)}(\lambda, y)
	= \sigma_l^{(n)}(\lambda)
	+ \sum_{s=1}^m b_s \sigma_{l-s}^{(n)}(\lambda)
	> 0, 
	\quad l = 1, \dots, k.
\end{align}
To obtain the desired estimates, we require that the solution-more specifically, $\lambda(g)$-satisfy one of the following conditions:
\begin{enumerate}
	\item \label{con1}
	$\lambda \in \Gamma_k^{(n+m)}$ and $y_i \geq 0$ for all $i = 1, \dots, m$;
	
	\item \label{con2}
	$\lambda \in \Gamma_{k-1}^n$ and $b_i \geq 0$ for all $i = 1, \dots, m$.
\end{enumerate}
We note that Condition~\eqref{con2} implies 
$\lambda \in \Gamma_k^{(n+m)}$ by \eqref{eqn} and \eqref{sl}, 
while Condition~\eqref{con1} implies $b_i \ge 0$, 
but does not, in general, guarantee that $\lambda \in \Gamma_{k-1}^n$.

The complex $k$-Hessian equation corresponding to $b_s = 0$, $s = 1, \dots, m$, is given by
\begin{equation}\label{Sk2}
	\sigma_{k}^{(n)}(g)
	= \psi(z, D u, u).
\end{equation}
This equation has been studied extensively in the literature.
The complex $k$-Hessian equation without gradient terms on either side was solved on closed Hermitian manifolds by Sz\'ekelyhidi \cite{G.S} and Zhang \cite{ZhangDK}, respectively. 
Recently, Guo and Song \cite{GS} found a necessary and sufficient condition
for the existence of a solution to Hessian equations on closed manifolds.
Prior to that, some necessary and sufficient conditions for parabolic Hessian quotient equations on Kähler manifolds were established, arising from the study of the $J$-flow; see \cite{FLM, SW}.
The Dirichlet problem for the complex $k$-Hessian equation was studied by Li \cite{LiSY} and Collins-Picard \cite{CP}; for the case of complex Hessian quotient equations, see \cite{GL, GuanSun}.
For complex $k$-Hessian equations \eqref{Sk2},
Phong, Picard, and Zhang \cite{PPZ} first established second order estimates
for solutions in $\Gamma_n^n (M)$ on K\"ahler manifolds.
On Hermitian manifolds, Dong and Li \cite{DL} proved the estimate
for $g\in \Gamma_n^n (M)$.
For $g\in \Gamma_{k+1}^n (M)$, second order estimates were obtained in \cite{Dong}.
For equations involving sums of Hessians, the second order estimates was established by Dong \cite{Dong1} for $g \in \Gamma_{k+1}^n (M)$.

In this paper, we extend the above-mentioned second order estimates to a broader setting, which can be justified by Lemma \ref{sigmak} (8).
Let $\lambda^{\downarrow} \in \mathbb{R}^n$ denote the decreasing rearrangement of $\lambda \in \mathbb{R}^n$, defined by 
\[
\lambda^{\downarrow} : = (\lambda_1, \ldots, \lambda_n),\; \mbox{where}\;
\lambda_1 \geq \lambda_2 \geq  \cdots \geq \lambda_n.
\]
For $\delta > 0$, suppose that $\lambda^{\downarrow} (g)$ satisfies
\begin{equation}
	\label{g-cone}
 \lambda_n > -\delta \lambda_1. 
\end{equation}
In the real setting, this can be viewed as a semiconvexity condition with a variable lower bound on the Hessian, known in \cite{SY} as a dynamic semiconvexity condition.
In the complex setting, we introduce the corresponding notion, which we refer to as a \textit{dynamic plurisubharmonic} condition.
We now state the following estimate.
\begin{theorem}\label{thm1}
	Let $(M,\omega)$ be a compact Hermitian manifold of complex dimension $n \geq 2$, and let $u \in C^{\infty}(M)$. 
	
	Assume that \eqref{eqn} satisfies Hypothesis~(RR) and that
	\[
	\chi(z,u) \geq \varepsilon \omega
	\]
	for some constant $\varepsilon > 0$. Assume that the smooth real $(1,1)$-form $g$, defined in \eqref{g}, satisfies Condition \eqref{con1} (or Condition \eqref{con2}) and \eqref{g-cone} for some sufficiently small $\delta > 0$. 
	
	Then we obtain the following uniform second-order estimate:
	\begin{equation}
		|D \overline{D} u|_{\omega} \leq C,
	\end{equation}
	where $C$ is a uniform constant depending only on $(M,\omega)$, $\varepsilon$, $\delta$, $n$, $k$, $m$, $\chi$, $\psi$, $a$, $\|b\|$, and $\|u\|_{C^1}$.
\end{theorem}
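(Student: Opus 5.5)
We follow the standard maximum principle scheme for second order estimates on Hermitian manifolds, in the form used by Hou--Ma--Wu and refined by Sz\'ekelyhidi and Dong. Since $\lambda(g)$ lies in $\Gamma_k^{(n+m)}\subset\Gamma_1^n$, we have $\sum_i\lambda_i>0$, and \eqref{g-cone} gives $|\lambda_i|\le \max(1,\delta)\lambda_1$. So it suffices to bound $\lambda_1$ from above. We consider the test function
\[
\Phi=\log \lambda_1 + \varphi(|Du|^2) + \phi(u),
\]
with $\varphi(t)=-\tfrac12\log(1-\tfrac{t}{2K})$, $K=\sup|Du|^2+1$, and $\phi(u)=-A u$ (or $e^{-Au}$-type) for a large constant $A$. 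If $\lambda_1$ is not simple at the maximum point $z_0$, we replace $\lambda_1$ by the perturbed eigenvalue $\tilde\lambda_1$ of $g-B$, where $B$ is a small constant-coefficient matrix in normal coordinates. We work in coordinates at $z_0$ where $\omega=I$ and $g$ is diagonal. We differentiate \eqref{eqn2} once and twice in the $e_1$ direction and use the formulas for $\partial\lambda_1$ and $\partial\bar\partial\lambda_1$. With $F^{i\bar i}=\partial F/\partial\lambda_i$, we obtain at $z_0$
\[
0\ge \frac{F^{i\bar i}\lambda_{1,i\bar i}}{\lambda_1}-\frac{F^{i\bar i}|\lambda_{1,i}|^2}{\lambda_1^2}+\varphi' F^{i\bar i}(|u_{pi}|^2+\lambda_i^2)+\varphi''F^{i\bar i}|(|Du|^2)_i|^2+\phi' F^{i\bar i}u_{i\bar i}+\phi''F^{i\bar i}|u_i|^2 - C\sum F^{i\bar i}-C.
\]
The term $-F^{i\bar j,p\bar q}g_{i\bar j1}g_{p\bar q\bar 1}$ arising from $\lambda_{1,i\bar i}$ is kept, together with the positive terms $\sum_{p>1}F^{i\bar i}|g_{p\bar 1 i}|^2/(\lambda_1-\lambda_p)$. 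The torsion terms and the terms coming from $a$, $\chi(z,u)$ and $\psi(z,Du,u)$ produce errors of size $C\lambda_1\sum F^{i\bar i}$ and $C\sum F^{i\bar i}|g_{1\bar 1 i}|/\lambda_1$. The $Du$-dependence of $\psi$ is absorbed by the good term $\varphi'F^{i\bar i}\lambda_i^2$.

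The core of the argument is a concavity inequality for $F$, which we take to hold on the set cut out by Condition \eqref{con1} or \eqref{con2} together with \eqref{g-cone}. We would derive it from the earlier lemmas on $\sigma_k^{(n+m)}(\lambda,y)$, viewing $F(\lambda)=\sigma_k^{(n+m)}(\lambda,y)$ via \eqref{sl}. The inequality we aim for is: for any $\epsilon>0$ there is $\delta_0$ such that when $\lambda_n>-\delta\lambda_1$ with $\delta<\delta_0$ and $\lambda_1$ is large,
\[
-\sum F^{p\bar p,q\bar q}\xi_p\bar\xi_q+(1+\epsilon)\frac{F^{1\bar 1}|\xi_1|^2}{\lambda_1}-\ldots \;\ge\; -\epsilon\,\frac{\sum_i F^{i\bar i}|\xi_i|^2}{\lambda_1}\cdot(\text{small})\,,
\]
that is, an inequality of the type $-F^{pp,qq}\xi_p\xi_q+\frac{(F^{11}\xi_1)^2\ldots}{F}\ge (1-\epsilon)\frac{F^{11}\xi_1^2}{\lambda_1}$. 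We would prove it by splitting into $\sigma_k$ and $\sigma_{k-1}$ pieces on $\mathbb R^{n+m}$ and using that $y_i\ge0$ (Condition \eqref{con1}) or $b_i\ge0$ with $\lambda\in\Gamma_{k-1}^n$ (Condition \eqref{con2}). In either case the $y$-components act as nonnegative entries, and the dynamic condition lets us treat $\lambda$ as an almost $\Gamma_n$ vector, where the known $\Gamma_{k+1}$-type argument of \cite{Dong1} applies. This step is the main obstacle. The difficulty is to ensure that the constants do not degenerate, since $\lambda$ need not lie in $\Gamma_{k+1}^n$, and the smallness of $\delta$ must absorb the negative eigenvalues. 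We expect this step to rely on Lemma \ref{sigmak}(8) and the associated estimates $F^{i\bar i}\ge c\,F^{1\bar 1}$ for $i\ge2$ and $\sum_iF^{i\bar i}\ge c>0$.

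With the concavity inequality in hand, we split into two cases in the standard way. In the first case $\lambda_n<-\theta\lambda_1$, which is excluded when $\delta\le\theta$; this is where \eqref{g-cone} simplifies matters. In the second case all $|\lambda_i|\le C\theta\lambda_1$ for $i$ beyond the set $I=\{i:F^{i\bar i}\le \theta^{-1}F^{1\bar 1}\}$. Indices $i\notin I$ are handled through the critical-point equation $\lambda_{1,i}/\lambda_1=-\varphi'(|Du|^2)_i-\phi'u_i$, which converts $|\lambda_{1,i}|^2/\lambda_1^2$ into terms absorbed by $\varphi''$. Indices $i\in I$ are handled by the concavity inequality and the third-derivative terms $F^{i\bar i}|g_{1\bar i 1}|^2/(\lambda_1-\lambda_i)$, where commuting derivatives produces torsion errors of size $C\lambda_1$. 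Next, the assumption $\chi\ge\varepsilon\omega$ gives $\phi'F^{i\bar i}u_{i\bar i}\ge A\varepsilon\sum F^{i\bar i}-CA$ after using $\sum F^{i\bar i}\lambda_i=kF-\sum_s s\,b_s\sigma_{k-s}\le C$. Choosing $A$ large therefore yields
\[
0\ge \tfrac{A\varepsilon}{2}\sum F^{i\bar i}+c\,\varphi' F^{1\bar 1}\lambda_1^2-CA.
\]
Finally, $F^{1\bar 1}\lambda_1\ge c\,F$ together with $\sum F^{i\bar i}\ge c$ gives $\lambda_1\le C$ at $z_0$. Hence $\Phi$ is bounded above, and so $|D\overline D u|_\omega\le C$.
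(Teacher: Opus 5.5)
\textbf{There is a genuine gap.} Your test function cannot handle the gradient dependence of $\psi$, which the theorem allows to be non-convex in $Du$. Differentiating the equation twice gives $\frac{1}{\lambda_1}D_1D_{\bar 1}\psi \ge -C\lambda_1-\frac{C}{\lambda_1}|DDu|^2+\ldots$. The only terms that can absorb this are $\varphi'(|DDu|^2_{F\omega}+|D\overline{D}u|^2_{F\omega})$. In general these give no more than $\frac{\varphi'}{C\lambda_1}(|DDu|^2+|D\overline{D}u|^2)$; for instance $|u_{11}|^2$ carries only the weight $F^{1\bar 1}$, which in general is only bounded below by $\frac{1}{C\lambda_1}$. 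With $\varphi=-\frac12\log(1-\frac{t}{2K})$ you have $\varphi'\le\frac{1}{2K}$, so your claim that the $Du$-dependence is absorbed by $\varphi'F^{i\bar i}\lambda_i^2$ fails.

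This is why the paper takes $\varphi=e^{Nt}$ with $N$ large. But then $\varphi''\ge 2\varphi'^2$ fails, so your step converting $F^{i\bar i}|\lambda_{1,i}|^2/\lambda_1^2$ via the critical equation into terms absorbed by $\varphi''$ breaks down, in particular for $i=1$. Your index split is also reversed. For $i$ with $F^{i\bar i}>\theta^{-1}F^{1\bar 1}$, the critical equation leaves $A^2F^{i\bar i}|u_i|^2$, which $A\varepsilon\sum F^{i\bar i}$ cannot absorb. Those indices must be handled by the third-order terms $\frac{F^{i\bar i}|g_{i\bar 1 1}|^2}{\lambda_1(\lambda_1-\lambda_i)}$, which the paper uses for all $i>\mu$.

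The paper closes the argument differently. It takes $\phi=e^{K(-u+\|u\|_{C^1}+1)}$ with $K\gg N$, so that $\varphi''\ge 2\phi''\varphi'^2/\phi'^2$. The $\phi''$ term then buys only the small fraction $\frac{\phi''}{4\phi'^2}=\frac{1}{4\phi}$ of $F^{i\bar i}|g_{1\bar 1 i}|^2/\lambda_1^2$. The leftover $-(1-\frac{3}{16\phi})F^{1\bar 1}|g_{1\bar 1 1}|^2/\lambda_1^2$ is absorbed by three contributions together:
$-F^{p\bar p,q\bar q}g_{p\bar p1}g_{q\bar q\bar 1}$, then $2|\sum_iF^{i\bar i}g_{i\bar i1}|^2/F$, and the good terms $\frac{F^{i\bar i}|g_{1\bar i i}|^2}{(\lambda_1-\lambda_i)\lambda_1}$ coming from the second derivative of the largest eigenvalue (commuted to $g_{i\bar i1}$). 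This is exactly Lemma \ref{key-c} with $\xi_i=g_{i\bar i 1}$ and $\gamma=\frac{1}{32\phi-4}$. Matching the concavity loss $\gamma$ against the $\phi''$ gain is the missing mechanism, and it is why $\delta$ must be small depending on $\phi$.

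Second, the concavity inequality, which you rightly call the core, is neither stated precisely nor proved. Your displays are inconsistent, and they omit the term $\sum_{i>1}\frac{F^{i\bar i}|\xi_i|^2}{(1+\delta)\lambda_1}$ on the left, which comes from the good eigenvalue terms above. The suggested reduction, treating $\lambda$ as almost in $\Gamma_n$ and running the $\Gamma_{k+1}$ argument of \cite{Dong1}, does not work: $\lambda_n\ge-\delta\lambda_1$ does not place $\lambda$ in $\Gamma_{k+1}^n$. That argument only covers the case $\sigma_{k;1}\ge-\frac{1}{16k}\sigma_k$, where $\lambda_1\sigma_{k-1;1}\le(1+\frac{1}{16k})\sigma_k$. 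In the complementary case, after normalizing $\lambda_1=1$, one has $-\sigma_{k+1}\ge c\,\sigma_k$ and $\sigma_{k-1;1}\le c\,\delta^2\sigma_{k-1}$ (Claims 1--3 in the proof of Lemma \ref{key-c}). The paper then needs three further ingredients:
the decomposition $\xi=(1+a)\xi_1\tilde\lambda-a\xi_1e_1+\zeta$ with $(\zeta_k,\ldots,\zeta_n)\perp(\lambda_k,\ldots,\lambda_n)$;
the quantitative concavity of $q_k=\sigma_k/\sigma_{k-1}$ on such $\zeta$ from Lemma \ref{qk-1}, built on the complex Huisken--Sinestrari recursion of Lemma \ref{q2};
and a careful balancing of the $|a|^2$ and $|1+a|^2$ terms against $-\sigma_{k;1}/\sigma_k$.
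None of this is supplied, so the proof as proposed does not close.
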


The real $(1,1)$-form $g$, which contains gradient terms,
is a typical structure arising in the study of Gauduchon conjecture.
The conjecture was resolved by Sz\'ekelyhidi, Tosatti, and Weinkove
\cite{STW}, who established a second order estimate of the form 
$\max_M |D\overline Du| \leq C (1 + \max_M |Du|^2)$, where $C$ is a uniform constant depending only on the known data, but not on $u$. Related results can also be found in the work of Guan and Nie \cite{GN}. There, the authors also derived interior second order estimates for the equation under stronger assumptions on $\psi$ and $g$ with respect to $Du$. 
Tosatti and Weinkove \cite{T-W19} extended the study of the Monge-Amp\`ere equation with gradient terms in $g$ on Hermitian manifolds from complex dimension 2, as studied by Yuan \cite{Yuan}, to higher dimensions.
Second order estimates with the particular dependence on $|Du|$ were first
derived by Hou, Ma, and Wu \cite{HMW} on K\"ahler manifolds for $k$-Hessian equations. 
Sz\'ekelyhidi \cite{G.S} generalized the estimates to fully nonlinear equations on Hermitian manifolds.
These estimates played a key role in enabling the use of blow-up analysis and Liouville-type theorems to derive gradient estimates, as shown by Dinew and Ko\l odziej \cite{DK}. 
Another example in which $g$ depends on gradient terms arises in the study of the Calabi-Yau equation; see \cite{FinoLi}.

The key contributions of this article are that we do not require $\psi$ to be convex with respect to $Du$, and our real $(1,1)$-form $g$ does not satisfy the concavity condition with respect to $Du$ imposed in \cite{GN}, thereby extending the scope of previous results.
Hence, we need to take care of the negative terms arising from the differentiation of $\psi$.
To this end, we establish a key concavity inequality (see Lemma \ref{Fconcavity}) to control the negative third-order terms that arise from the fully nonlinear operator.
This inequality can also be employed to derive curvature estimates for hypersurfaces in space forms. 
Establishing a concavity inequality has been central in this line of research. Guan, Ren, and Wang \cite{GRW} achieved this for the real
$k$-Hessian equation by assuming that the solution lies in the $\Gamma_n^n$ cone. 
See the work of Chu \cite{Chu} for a simple proof.
Later, Li, Ren and Wang \cite{LRW} extended the result to solutions in $\Gamma_{k+1}^n$ cone. 
The problem was completely solved for solutions in the $\Gamma_k^n$ cone in the two cases $k = n-1$ and $k = n-2$ by Ren and Wang \cite{RW1, RW2}.
For simplified proofs, see the work of Lu and Tsai \cite{LT}.
However, their method cannot be applied to complex $k$-Hessian equations due to the presence of complex conjugate terms.
When $k=2$, the operator has a special structure that enables the derivation of second order estimates; see Chu, Huang and Zhu \cite{CHZ3}.
Therefore, it remains an open problem to establish second order estimates for solutions of the $k$-Hessian equation \eqref{Sk2} in the $\Gamma_k^n$ cone for $2 < k < n$.

The structure of the paper is as follows.
In Section 2, we introduce some useful notation, properties of the $k$-th elementary symmetric function,
and some preliminary calculations and estimates.
In Section 3, we prove the key inequality for the $k$-th elementary symmetric function.
In Sections 4, we prove our main Theorem \ref{thm1}.

\textbf{Acknowledgements}:
This work was carried out while the first-named author was visiting the University of Granada. He thanks Prof. José Gálvez for his support and gratefully acknowledges IMAG and the Department of Geometry and Topology for their warm hospitality. His visit was supported by the China Scholarship Council (No. 202406250051).

\section{Preliminaries}

For $x = (x_1, \ldots, x_n) \in \mathbb{R}^n$, we denote by $s_{k;i}(x)$ the $k$-th elementary symmetric function evaluated at $x$ with $x_i = 0$, and by $s_{k;ij}(x)$ the $k$-th elementary symmetric function evaluated at $x$ with $x_i = x_j = 0$. 

We define
\[
x \mid i := (x_1, \ldots, x_{i-1}, 0, x_{i+1}, \ldots, x_n).
\]

The following properties of the $k$-th elementary symmetric function are well known.

\begin{lemma}
	\label{sigmak}
	Let $x = (x_1, \ldots, x_n) \in \mathbb{R}^n$ and $k = 1, \ldots, n$. Then the following hold:
	\begin{itemize}
		\item[(1)] $s_k = s_{k;i} + x_i s_{k-1;i}$ for all $1 \leq i \leq n$;
		
		\item[(2)] $\displaystyle \sum_{i=1}^n s_{k;i} = (n-k) s_k$ and 
		$\displaystyle \sum_{i=1}^n s_{k-1;i} x_i = k s_k$;
		
		\item[(3)] $\displaystyle \sum_{i=1}^n s_{k-1;i} x_i^2 
		= s_1 s_k - (k+1) s_{k+1}$;
		
		\item[(4)] If $x \in \Gamma_k^n$ with $x_1 \geq \cdots \geq x_n$, then
		$x_1 s_{k-1;1} \geq \frac{k}{n} s_k$;
		
		\item[(5)] If $x \in \Gamma_k^n$, then $x \mid i \in \Gamma_{k-1}^n$ for all $1 \leq i \leq n$;
		
		\item[(6)] If $x \in \Gamma_k^n$ and $x_i \geq x_j$, then
		$s_{k-1;i} \leq s_{k-1;j}$;
		
		\item[(7)] If $x \in \Gamma_k^n$ with $x_1 \geq \cdots \geq x_n$, then
		\[s_k \leq C\, x_1 \cdots x_k \;\mbox{and}\; 
		s_{k-1} \geq x_1 \cdots x_{k-1};
		\]
		
		\item[(8)] If $x \in \Gamma_k^n$, and $s_k \leq A_1$ and $s_{k+1} \geq -A_2$
		for some constants $A_1, A_2 \geq 0$, then for all $i$,
		\[
		x_i + K \geq 0,
		\]
		where $K$ is a uniform positive constant depending only on $n$, $k$, $A_1$, and $A_2$.
	\end{itemize}
\end{lemma}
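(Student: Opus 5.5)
Most of the items in Lemma \ref{sigmak} are standard algebraic identities, and I would establish them in the order listed, since later items rely on earlier ones.

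For (1), split the monomials of $s_k$ according to whether they contain $x_i$. For (2), count the contributions. Each $k$-monomial is missing from exactly $n-k$ of the sums $s_{k;i}$, which gives $\sum_i s_{k;i}=(n-k)s_k$. Likewise, each $k$-monomial appears exactly $k$ times in $\sum_i x_i s_{k-1;i}$.

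For (3), apply (1) at level $k+1$, namely $x_i s_{k;i}=s_{k+1}-s_{k+1;i}$, and also $x_i s_{k-1;i}=s_k-s_{k;i}$. Then
\[
\sum_i x_i^2 s_{k-1;i}=\sum_i x_i s_k-\sum_i x_i s_{k;i}=s_1s_k-(k+1)s_{k+1},
\]
using (2) at level $k+1$ in the last step.

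Items (5) and (6) are the classical facts about G\aa rding cones. For (5), note that $\partial_i s_j=s_{j-1;i}>0$ on $\Gamma_k^n$ for $j\le k$; this is the usual ellipticity fact, proved by induction on $k$ via the convexity of $\Gamma_k^n$ and the hyperbolicity of $s_k$. Then decrease $x_i$ to $0$ along the segment from $x$ to $x\mid i$. Alternatively, argue directly by induction: $s_{j;i}=s_j-x_is_{j-1;i}$, and interlacing of the roots of the restricted polynomials yields $s_{j;i}>0$ for $j\le k-1$.

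For (6), use
\[
s_{k-1;j}-s_{k-1;i}=(x_i-x_j)\,s_{k-2;ij},
\]
together with $s_{k-2;ij}\ge 0$, which follows from applying (5) twice.

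For (4), use $x_1\ge x_i$ and (6): $s_{k-1;1}\le s_{k-1;i}$. This alone is not enough, so I would instead use (2): $ks_k=\sum_i x_is_{k-1;i}$. For indices with $x_i\le 0$ the terms are nonpositive. For indices with $x_i>0$ we have $x_is_{k-1;i}\le x_1 s_{k-1;i}$, but $s_{k-1;i}\ge s_{k-1;1}$ goes the wrong way. The standard argument (Lin--Trudinger) instead writes $s_k=x_1s_{k-1;1}+s_{k;1}$ and shows $s_{k;1}\le \frac{n-k}{n}s_k$ or a similar bound via Newton--Maclaurin applied to $x\mid 1$. I would cite this classical proof.

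For (7), the inequality $s_{k-1}\ge x_1\cdots x_{k-1}$ follows by expanding via (1) repeatedly in the variables $x_1,\dots,x_{k-1}$ and using positivity of the reduced $s_j$'s from (5). The bound $s_k\le Cx_1\cdots x_k$ is the known estimate; it follows from $s_k\le C x_1 s_{k-1;1}$ together with induction on $k$ applied to $x\mid 1\in\Gamma_{k-1}$.

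The main obstacle is (8). Let $x_n$ be the smallest entry and suppose $x_n<0$. Apply (1) at $i=n$:
\[
s_{k+1}=s_{k+1;n}+x_ns_{k;n}.
\]
Also $s_k=s_{k;n}+x_ns_{k-1;n}$ with $s_{k;n}\ge s_k>0$. Using $s_{k+1}\ge -A_2$, I aim to get $|x_n|s_{k;n}\le A_2+s_{k+1;n}$. Newton--Maclaurin on the cone controls $s_{k+1;n}\le C s_{k;n}^{(k+1)/k}$. Meanwhile $s_{k;n}=s_k-x_ns_{k-1;n}\le A_1+|x_n|s_{k-1;n}$, and $s_{k-1;n}\ge c\, x_1\cdots x_{k-1}$ by (7) applied to $x\mid n$.

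The plan is to show that if $|x_n|\to\infty$, then $x_1\to\infty$, and to compare degrees. Write $s_{k;n}\approx x_1\cdots x_k$ and $s_{k+1;n}\lesssim x_1\cdots x_{k+1}$. Then $|x_n|\,x_1\cdots x_k\lesssim A_2+x_1\cdots x_{k+1}$, forcing $|x_n|\lesssim x_{k+1}+\dots$. On the other hand, $s_k\le A_1$ together with $s_k\ge x_1\cdots x_k$ times something forces $|x_n|$ to be bounded via the product structure. I would argue by contradiction and compactness after normalizing by $|x_n|$, and expect this limiting argument to be the delicate step.
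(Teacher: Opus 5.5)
Your arguments for (1)--(3), (6) and (7) are correct. In the lower bound of (7) you should say explicitly that $x_1,\dots,x_{k-1}>0$, which follows from $x_k+\cdots+x_n>0$, i.e.\ from iterating (5). For (4) and (5) you fall back on the classical results, as the paper does. Note that the ``ellipticity fact'' $s_{j-1;i}>0$ on $\Gamma_k^n$ for $j\le k$ is literally (5), since $s_{j-1;i}=s_{j-1}(x\mid i)$, so the segment argument adds nothing.

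The genuine gap is (8). The paper only cites it (Li--Ren--Wang, Zhang), but you try to prove it, and your sketch does not close. The Maclaurin bound $s_{k+1;n}\le C s_{k;n}^{(k+1)/k}$ discards the essential information when $x_1$ is unbounded. Take $n=4$, $k=2$ and $x=(M,m,m,-t)$, with $m\approx t/2$ tuned so that $s_2(x)=1$. Then $s_{2;4}\approx Mt$, so your inequality $|x_4|\,s_{2;4}\le A_2+Cs_{2;4}^{3/2}$ essentially reads $Mt^2\le 3^{-3/2}(Mt)^{3/2}$. This holds for every large $t$ once $M$ is a large multiple of $t$, although $s_3(x)\approx-\frac34 Mt^2$. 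Your fallback steps also fail:
\begin{itemize}
\item $s_k\ge c\,x_1\cdots x_k$ is false in $\Gamma_k^n$. Near $\partial\Gamma_k^n$ the left side is small while the right side is large, and that is precisely the regime at stake.
\item Compactness after dividing by $|x_n|$ is unavailable, because $x_1/|x_n|$ is unbounded.
\end{itemize}

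The missing ingredient is Newton's inequality with its sharp constant, which holds for arbitrary real vectors; you already have the other pieces. Let $t=-x_n>0$ (so $k\le n-1$), $a=s_k\in(0,A_1]$ and $b=s_{k-1;n}>0$. Then $s_{k;n}=a+tb>0$, and $y=(x_1,\dots,x_{n-1})\in\Gamma_k^{n-1}$ by (5). Newton's inequality for $y$ gives
\[
s_{k-1;n}\,s_{k+1;n}\le c\,s_{k;n}^2,\qquad c=\frac{k(n-1-k)}{(k+1)(n-k)}<1 .
\]
Multiplying $-A_2\le s_{k+1}=s_{k+1;n}-t\,s_{k;n}$ by $b$ then yields
\[
(a+tb)\,((1-c)tb-ca)\le A_2\,b .
\]
So either $(1-c)tb\ge 2ca$, in which case $t^2b\le 2A_2/(1-c)$, or $tb<2cA_1/(1-c)$. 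From $x_k+\cdots+x_n>0$ you get $x_1\ge\cdots\ge x_k\ge t/(n-k)$. Applying (7) to $y$ gives $b\ge x_1\cdots x_{k-1}\ge (t/(n-k))^{k-1}$. Hence $t^{k+1}\le C(n,k)A_2$ or $t^{k}\le C(n,k)A_1$, which is (8).

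The same Newton inequalities also give (4) in one line. When $s_{k;1}>0$, apply them to $(x_2,\dots,x_n)\in\Gamma_k^{n-1}$ to get $s_{k;1}\le\frac{n-k}{k(n-1)}s_{1;1}s_{k-1;1}\le\frac{n-k}{k}x_1s_{k-1;1}$, hence $s_k\le\frac{n}{k}x_1 s_{k-1;1}$. When $s_{k;1}\le 0$ the bound is trivial.
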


\begin{proof}
For (1)--(3), the identities are straightforward. 
For (4), see \cite{HMW, LTr}. 
For (5), see \cite{HS}. It can also be derived from the following alternative characterization of $\Gamma_k$ (see \cite{K}):
\[
\Gamma_k^n
=
\left\{
x \in \mathbb{R}^n :
\partial^{\alpha} s_k(x) > 0
\ \text{for all multi-indices } \alpha
\text{ with } 0 \le |\alpha| \le k
\right\}.
\]
For (6) and (7), see \cite{YYLi, Wang}. 
For (8), see \cite{LRW, Zhang}.
\end{proof}

We recall some previous lemmas from \cite{HS, Zhang}.
\begin{lemma}\label{xk}
	Suppose $x^{\downarrow} \in \Gamma_k^n$. Then, we have
	\[
	x_k \leq C \left(s_k^{\frac{1}{k}}+|x_n|\right)
	\]
	for some constant $C$ depending on $n$ and $k$.
\end{lemma}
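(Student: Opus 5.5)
Write $x_1\ge x_2\ge\cdots\ge x_n$. The plan is to bound $x_k$ by dividing into cases according to whether $x_k$ is large or small compared with $|x_n|$. If $x_k\le 0$ there is nothing to prove, since the right-hand side is nonnegative. Hence we may assume $x_k>0$, which gives $x_1\ge\cdots\ge x_k>0$. A basic fact is $x_n \ge -(n-1) x_1$, which follows from $s_1>0$; we will not actually need it. Roughly, the idea is that $s_k$ is bounded below by $x_1\cdots x_k\ge x_k^k$, up to error terms controlled by $|x_n|$.

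\emph{Case 1:} $x_k \le N|x_n|$, where $N=N(n,k)$ is a large constant fixed below. Then $x_k\le N|x_n|$, which is the claimed bound.

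\emph{Case 2:} $x_k > N|x_n|$. Let $p$ be the number of negative entries. Every negative entry satisfies $|x_j|\le |x_n| < x_k/N$, so each of them is tiny compared with $x_k$. Expand
\[
s_k(x)=\sum_{|I|=k}\prod_{i\in I}x_i .
\]
The positive contribution includes the term $x_1\cdots x_k$, together with the other terms built from nonnegative indices, all of which are $\ge0$. Each term containing at least one negative index is bounded in absolute value by $|x_n|$ times a product of $k-1$ other entries. We need to compare such a term with $x_1\cdots x_k$. If all $k-1$ remaining indices lie in $\{1,\dots,k\}$, the term is at most $(|x_n|/x_k)\,x_1\cdots x_k\le N^{-1}x_1\cdots x_k$. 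A remaining index $j>k$ has $|x_j|\le x_k$, so replacing it by a missing index from $\{1,\dots,k\}$ only increases the bound. Consequently every term containing a negative index is at most $N^{-1}x_1\cdots x_k$ in absolute value. Since there are at most $\binom nk$ such terms, we get
\[
s_k\ge \Bigl(1-\tbinom nk N^{-1}\Bigr)x_1\cdots x_k\ge \tfrac12 x_k^k
\]
once $N=2\binom nk$. This yields $x_k\le 2^{1/k}s_k^{1/k}$.

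Combining the two cases gives $x_k\le C(s_k^{1/k}+|x_n|)$ with $C=\max(2\binom nk,2^{1/k})$. The only step requiring care is the term-by-term domination in Case 2. To make it precise, for a $k$-set $I$ containing a negative index $j_0$, map the other indices of $I$ injectively into $\{1,\dots,k\}$ with $\{1,\dots,k\}\setminus\{\text{image}\}\neq\emptyset$, using the ordering so that each entry is bounded by its image. Note that this argument never uses $x\in\Gamma_k^n$ beyond the ordering and $x_k>0$, so the hypothesis mainly guarantees $s_k>0$, which makes $s_k^{1/k}$ meaningful.
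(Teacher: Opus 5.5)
Your proof is correct and follows the same scheme as the paper. Both bound $s_k$ below by $x_1\cdots x_k$ minus error terms of size $x_1\cdots x_{k-1}|x_n|$, then split according to whether $x_k$ dominates $|x_n|$. The one difference is where the control on the entries $x_j$, $j>k$, comes from: the paper uses the cone property $x_k+\cdots+x_n>0$ to get $|x_n|\le (n-k)x_k$, while you get $|x_j|\le x_k$ directly from the case assumption $x_k>N|x_n|$, so your version needs only $s_k\ge 0$.
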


\begin{proof}
	
Since $x_k + \cdots + x_n > 0$, we have $|x_n| \leq (n-k) x_k$, and hence
\begin{equation}
\label{c}
  x_1 \cdots x_{k-1} x_k \leq s_k + C (n, k) x_1 \cdots x_{k-1} |x_n|.  
\end{equation}
	Therefore, we obtain
	\begin{equation}
		\label{c1}
		 x_k \leq 2C(n,k) \left(s_k^{\frac{1}{k}}+|x_n|\right).
	\end{equation}
	This completes the proof of the lemma.
\end{proof}
Set
\[
q_k = \frac{s_k}{s_{k-1}}\; \mbox{and} \; q_{k;i} = \frac{s_{k;i}}{s_{k-1;i}}.
\] 
Note that $q_1 = s_1$ since $s_0 \equiv 1$.
The following lemma was established in the real setting in \cite{HS}. In this paper, we present a proof in the complex case for completeness.
\begin{lemma}\label{q2}
	Given $k\geq 2$, if $x \in \Gamma_k^n$, then for any $\xi \in \mathbb{C}^n$, we have
	\[
	- \partial_{\ov \xi} \partial_\xi q_2 = \sum_{i=1}^n \frac{\Big| \xi_i - \frac{x_i}{s_1} s_1 (\xi) \Big|^2}{s_1}
	\]
	and 
	\[
	\partial_{\ov \xi} \partial_\xi q_{k+1} \leq \sum_{i=1}^n \frac{x_i^2 \partial_{\ov \xi} \partial_\xi q_{k;i}}{(k+1)(q_{k;i} + x_i)^2}.
	\]
\end{lemma}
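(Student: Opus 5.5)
The plan is to prove both formulas by direct computation, reading $\partial_\xi f=\sum_i f_i\xi_i$ and $\partial_{\ov\xi}\partial_\xi f=\sum_{i,j} f_{ij}\xi_i\ov\xi_j$ for a real function $f$ of $x$. We also write $s_1(\xi)=\sum_i\xi_i$.

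\emph{First identity.} Since $s_2=\frac12\big(s_1^2-\sum_i x_i^2\big)$, we have
\[
q_2=\tfrac12 s_1-\tfrac{1}{2}\,\frac{|x|^2}{s_1},
\]
where $s_1>0$ because $x\in\Gamma_k^n$ with $k\ge2$. The term $\frac12 s_1$ is linear, so only $f=|x|^2/s_1$ contributes to the Hessian. Differentiating gives
\[
\tfrac12 f_{ij}=\frac{\delta_{ij}}{s_1}-\frac{x_i+x_j}{s_1^2}+\frac{|x|^2}{s_1^3}.
\]
Contracting with $\xi_i\ov\xi_j$ yields
\[
\frac{|\xi|^2}{s_1}-\frac{2\,\mathrm{Re}\big(s_1(\xi)\sum_i x_i\ov\xi_i\big)}{s_1^2}+\frac{|x|^2|s_1(\xi)|^2}{s_1^3}.
\]
This is exactly the expansion of $\sum_i\big|\xi_i-\frac{x_i}{s_1}s_1(\xi)\big|^2/s_1$, which proves the first claim.

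\emph{Second inequality.} The key step is a representation of $q_{k+1}$ as a sum of harmonic-mean type terms.

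1. By Lemma \ref{sigmak}(5), $x\mid i\in\Gamma_{k-1}^n$, so $s_{k-1;i}>0$ and $q_{k;i}$ is smooth.

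2. By Lemma \ref{sigmak}(1), $s_k=(q_{k;i}+x_i)s_{k-1;i}$. Hence $q_{k;i}+x_i=s_k/s_{k-1;i}>0$, and
\[
\frac{x_i s_{k;i}}{s_k}=\frac{x_iq_{k;i}}{x_i+q_{k;i}}.
\]

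3. Summing over $i$ and using $\sum_i x_is_{k;i}=(k+1)s_{k+1}$ (Lemma \ref{sigmak}(2) with $k$ replaced by $k+1$) gives
\[
q_{k+1}=\frac1{k+1}\sum_i h\big(x_i,q_{k;i}(x)\big),\qquad h(a,b)=\frac{ab}{a+b},
\]
valid on the half-plane $a+b>0$.

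Now apply the chain rule to each term. The function $h$ is real, smooth and $1$-homogeneous, with
\[
D^2h(a,b)=-\frac{2}{(a+b)^3}\,(b,-a)\otimes(b,-a).
\]
The $\xi$-Hessian of $h(x_i,q_{k;i}(x))$ is therefore the sum of two pieces. The first is
\[
-\frac{2\,|q_{k;i}\xi_i-x_i\,\partial_\xi q_{k;i}|^2}{(x_i+q_{k;i})^3}\le 0,
\]
which is nonpositive since $x_i+q_{k;i}>0$; here the complex modulus appears because $D^2h$ is real symmetric of rank one. The second is
\[
h_b\,\partial_{\ov\xi}\partial_\xi q_{k;i},\qquad h_b=\frac{x_i^2}{(x_i+q_{k;i})^2}.
\]
The linear-in-$x_i$ part contributes nothing, since $\partial_{\ov\xi}\partial_\xi x_i=0$. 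Dropping the nonpositive pieces and dividing by $k+1$ gives the stated inequality.

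The main obstacle is spotting the harmonic-mean representation in step 3. After that, the only care needed is checking positivity of $x_i+q_{k;i}$ and verifying that the complex chain rule yields a modulus squared with the right sign.
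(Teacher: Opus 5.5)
Your proposal is correct and follows essentially the same route as the paper. The first identity comes from direct differentiation of $q_2=\tfrac12 s_1-\tfrac12|x|^2/s_1$. The second comes from the Huisken--Sinestrari representation $(k+1)q_{k+1}=\sum_i x_i q_{k;i}/(x_i+q_{k;i})$, whose second variation gives exactly the paper's nonpositive terms $P_i=-2|q_{k;i}\xi_i-x_i\partial_\xi q_{k;i}|^2/(q_{k;i}+x_i)^3$ plus $x_i^2\partial_{\ov\xi}\partial_\xi q_{k;i}/(q_{k;i}+x_i)^2$. The only differences are presentational: you derive the representation from Lemma~\ref{sigmak}(1),(2) rather than citing formula (2.12) of Huisken--Sinestrari, and you get the sign from the rank-one Hessian of $h(a,b)=ab/(a+b)$ instead of expanding the terms by hand.
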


\begin{proof}
By a direct calculation, we obtain that
\[\begin{aligned}
&\; - \partial_{\ov \xi} \partial_\xi q_2 \\
= &\; - \frac{\sum_{i \neq j} \xi_i \xi_{\ov j} }{s_1} + \frac{\sum_{ij} s_{1;i} \xi_i \xi_{\ov j}}{s_1^2} + 
 \frac{ \sum_{ij} s_{1;j} \xi_i \xi_{\ov j}}{s_1^2} - 2  \frac{\sum_{ij} s_2 \xi_i \xi_{\ov j}}{s_1^3}\\
 = &\; - \frac{ |s_1 (\xi)|^2 - |\xi|^2 }{s_1} + \frac{ (s_1 - x_i) \xi_i s_1 (\ov \xi)}{s_1^2} + \frac{ (s_1 - x_j)\xi_{\ov  j} s_1 (\xi)}{s_1^2} - \frac{s_1^2 - |x|^2}{s_1^3} |s_1(\xi)|^2\\
 = &\; \frac{|\xi|^2}{s_1} - \frac{x \cdot \xi s_1 (\ov \xi)}{s_1^2}
 - \frac{x \cdot \ov \xi s_1 ( \xi)}{s_1^2} + \frac{|x|^2 |s_1 (\xi)|^2}{s_1^3}.
\end{aligned}\]
Thus, the equation is proved.
For the inequality, recall formula (2.12) in \cite{HS} that
\[
(k+1) q_{k+1} (x) = \sum_{i=1}^n \Big(x_i - \frac{x_i^2}{q_{k;i} (x) + x_i}\Big).
\]
Therefore, we have
\[
\begin{aligned}
   (k+1) \partial_{\ov\xi} \partial_{\xi} q_{k+1} 
   = \sum_{i=1}^n \frac{x_i^2 \partial_{\ov \xi} \partial_\xi q_{k;i}}{(q_{k;i} + x_i)^2} + \sum_{i=1}^n P_i,
\end{aligned}
\]
where
\[
\begin{aligned}
P_i
= &\; - \frac{2 \xi_i \xi_{\ov i}}{q_{k;i} + x_i} + \frac{2 x_i \xi_i \xi_{\ov i}}{(q_{k;i} + x_i)^2} + \frac{2 x_i \xi_i \partial_{\ov\xi} q_{k;i}}{(q_{k;i} + x_i)^2}\\
   &\; + \frac{2 x_i\xi_i \xi_{\ov i}}{(q_{k;i} + x_i)^2} - \frac{2 x_i^2 \xi_i \xi_{\ov i}}{(q_{k;i} + x_i)^3} - \frac{2 x_i^2 \xi_i \partial_{\ov \xi} q_{k;i} }{(q_{k;i} + x_i)^3}\\
   &\; + \frac{2 x_i \xi_{\ov i} \partial_{ \xi} q_{k;i} }{(q_{k;i} + x_i)^2} - \frac{2 x_i^2 \xi_{\ov i} \partial_{\xi} q_{k;i} }{(q_{k;i} + x_i)^3} - \frac{2 x_i^2 \partial_{\ov \xi} q_{k;i} \partial_{\xi} q_{k;i} }{(q_{k;i} + x_i)^3}.
\end{aligned}
\]
A direct computation gives that
\[
- \frac{2 \xi_i \xi_{\ov i}}{q_{k;i} + x_i} + \frac{4 x_i \xi_i \xi_{\ov i}}{(q_{k;i} + x_i)^2}  - \frac{2 x_i^2 \xi_i \xi_{\ov i}}{(q_{k;i} + x_i)^3} = - \frac{ 2 q_{k;i}^2 \xi_i \xi_{\ov i}}{(q_{k;i} + x_i)^3},
\]
and
\[
 \frac{2 x_i \xi_i \partial_{\ov\xi} q_{k;i}}{(q_{k;i} + x_i)^2} - \frac{2 x_i^2 \xi_i \partial_{\ov \xi} q_{k;i} }{(q_{k;i} + x_i)^3}
= \frac{2 x_i q_{k;i} \xi_i \partial_{\ov \xi} q_{k;i} }{(q_{k;i} + x_i)^3}.
\]
Note that, for $x \in \Gamma_k^n$,
\[
q_{k;i} + x_i = \frac{s_k}{s_{k-1;i}} > 0.
\]

Hence, we have
\[\begin{aligned}
     P_i
    = &\; \frac{2}{(q_{k;i} + x_i)^3} (-q_{k;i}^2 |\xi_i|^2 + x_i q_{k;i} \xi_i \partial_{\ov\xi} q_{k;i} + x_i q_{k;i} \xi_{\ov i} \partial_{\xi} q_{k;i} - x_i^2 \partial_{\ov\xi} q_{k;i}\partial_{\xi} q_{k;i} )\\
    = &\; - \frac{2}{(q_{k;i} + x_i)^3} (x_i \partial_{\xi} q_{k;i} - q_{k;i} \xi_i)(x_i \partial_{\ov \xi} q_{k;i} - q_{k;i} \xi_{\ov i})\leq 0.
\end{aligned}
\]
The proof of the lemma is completed.
\end{proof}

Denote by $[\xi]_{i_1, \ldots, i_{\ell}}$ the vector obtained from $\xi$ by setting its $i_1$-th, $\ldots$, $i_{\ell}$-th components to zero.
By the above lemma, we can further derive that
\begin{equation}
\label{2D-q_2}
\begin{aligned}
    - \partial_{\ov \xi} \partial_{\xi} q_{2;i_1, \ldots, i_\ell} 
    = \sum_{i\notin \{i_1, \ldots, i_\ell\} } 
    \frac{|\xi_i - \frac{x_i}{s_{1;i_1, \ldots, i_\ell }} s_{1;i_1, \ldots, i_\ell } (\xi)|^2}{s_{1;i_1, \ldots, i_\ell }}
    \geq \frac{|[\xi]_{i_1, \ldots, i_\ell}^{\perp}|^2}{s_{1;i_1, \ldots, i_\ell }},
\end{aligned}
\end{equation}
where $[\xi]_{i_1, \ldots, i_\ell} = c [x]_{i_1, \ldots, i_\ell} + [\xi]_{i_1, \ldots, i_\ell}^\perp$ for some $c \in \mathbb{C}$, and that
\begin{equation}
\label{2D-q}
    \partial_{\ov \xi} \partial_{\xi} q_{k+1; i_1, \ldots, i_\ell}
    \leq \sum_{i\notin \{i_1, \ldots, i_\ell\} } 
    \frac{x_i^2 \partial_{\ov \xi} \partial_{\xi} q_{k;i_1, \ldots, i_\ell,i}}{(k+1) (q_{k; i_1, \ldots, i_\ell, i} + x_i)^2}.
\end{equation}
Since
\[
q_{k; i_1, \ldots, i_\ell, i} + x_i = \frac{s_{k;i_1, \ldots, i_\ell}}{s_{k-1; i_1, \ldots, i_\ell, i}},
\]
we obtain from \eqref{2D-q} that
\[
- s_{k;i_1, \ldots, i_\ell} \partial_{\ov \xi} \partial_{\xi} q_{k+1; i_1, \ldots, i_\ell}
    \geq - \sum_{i\notin \{i_1, \ldots, i_\ell\} } 
    \frac{ x_i^2 \partial_{\ov \xi} \partial_{\xi} q_{k;i_1, \ldots, i_\ell,i} s_{k-1; i_1, \ldots, i_\ell, i}}{(k+1) \frac{s_{k;i_1, \ldots, i_\ell}}{s_{k-1; i_1, \ldots, i_\ell, i}}}.
\]
Using the above inequality repeatedly, we arrive at
\[
\begin{aligned}
    &\; - s_{k-1} \partial_{\ov \xi} \partial_{\xi} q_{k}\\
    \geq &\; - \sum_{i_1} \frac{x_{i_1}^2 \partial_{\ov\xi} \partial_{\xi} q_{k-1;i_1} s_{k-2;i_1}}{k \frac{s_{k-1}}{s_{k-2;i_1}}}\\
    \geq &\; \cdots \\
    \geq &\; - \sum_{i_1} \; \cdots \sum_{i_{k-2} \notin \{i_1, \ldots, i_{k-3}\}} \frac{2 x_{i_1}^2 \cdots x_{i_{k-2}}^2 \partial_{\ov\xi} \partial_{\xi} q_{2; i_1, \ldots, i_{k-2}} s_{1; i_1, \ldots, i_{k-2}}}{ k! \frac{s_{k-1}}{s_{1; i_1, \ldots, i_{k-2}} } }\\
    \geq &\; \sum_{i_1} \; \cdots \sum_{i_{k-2} \notin \{i_1, \ldots, i_{k-3}\}} \frac{2 x_{i_1}^2 \cdots x_{i_{k-2}}^2  |[\xi]^\perp_{i_1, \ldots, i_{k-2}}|^2 }{ k! \frac{s_{k-1}}{s_{1; i_1, \ldots, i_{k-2}} } },
\end{aligned}
\]
where we used \eqref{2D-q_2} in the last inequality.
Since $s_{k-1}\leq C x_1 \cdots x_{k-1}$ and $s_{1; i_1, \ldots, i_{k-2}} \geq x_j$, where $j$ is the remaining index in $\{1, \ldots, k-1\}$ not among $\{i_1, \ldots, i_{k-2}\}$,
we obtain from above that
\begin{equation}
\label{2D-q_k}
- s_{k-1} \partial_{\ov \xi} \partial_{\xi} q_{k}
\geq C\sum_{j=1}^{k-1} x_1 \cdots \hat x_j \cdots x_{k-1} |[\xi]^\perp_{1, \ldots, \hat j, \cdots, k-1 }|^2. 
\end{equation}
Now, as in \cite{Zhang}, we can show 
\begin{lemma}
\label{qk-1}
Let $x \in \Gamma_k$.
Assume that $\zeta = (0, \zeta_2, \ldots, \zeta_n) \in \mathbb{C}^n$ satisfies
\[
(\zeta_k, \ldots, \zeta_n) \perp (x_k, \ldots, x_n).
\]
Then,
\[
- s_{k-1} \partial_{\ov \zeta} \partial_\zeta q_{k} \geq C x_1 \cdots x_{k}^2 \sum_{j=2}^{k-1} \frac{|\zeta_j|^2}{x_j^3} + C x_1 \cdots x_{k-2} \sum_{p=k}^n |\zeta_p|^2,
\]
where $C$ depends only on $n$ and $k$.
\end{lemma}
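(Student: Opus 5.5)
We plan to derive the lemma from \eqref{2D-q_k}, with the ordering $x_1\ge\cdots\ge x_n$ understood, applied with $\xi=\zeta$. For every $j\in\{1,\ldots,k-1\}$ the right-hand side of \eqref{2D-q_k} contains the term $x_1\cdots\hat x_j\cdots x_{k-1}\,|[\zeta]^\perp_{1,\ldots,\hat j,\ldots,k-1}|^2$. The vector $[\zeta]_{1,\ldots,\hat j,\ldots,k-1}$ has only the coordinates $j$ and $k,\ldots,n$ possibly nonzero. Its component orthogonal to $[x]_{1,\ldots,\hat j,\ldots,k-1}$ is what has to be bounded from below. The main tool will be the hypothesis $(\zeta_k,\ldots,\zeta_n)\perp(x_k,\ldots,x_n)$.

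\textbf{Step 1 (the tail).} We take $j=1$. Then $\zeta_1=0$, so $[\zeta]_{2,\ldots,k-1}=(0,\ldots,0,\zeta_k,\ldots,\zeta_n)$. This vector is orthogonal to $[x]_{2,\ldots,k-1}=(x_1,0,\ldots,0,x_k,\ldots,x_n)$, because the first coordinate of $\zeta$ vanishes and the tails are orthogonal by hypothesis. Hence the projection onto $[x]_{2,\ldots,k-1}$ is zero, and
\[
|[\zeta]^\perp_{2,\ldots,k-1}|^2=\sum_{p\ge k}|\zeta_p|^2 .
\]
The coefficient is $x_2\cdots x_{k-1}\ge x_1\cdots x_{k-2}\cdot(x_{k-1}/x_1)$. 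This is not directly $x_1\cdots x_{k-2}$, so we will also use the term $j=k-1$, whose coefficient is exactly $x_1\cdots x_{k-2}$. There $[\zeta]_{1,\ldots,k-2}=(0,\ldots,0,\zeta_{k-1},\zeta_k,\ldots,\zeta_n)$ and $[x]_{1,\ldots,k-2}=(0,\ldots,0,x_{k-1},x_k,\ldots)$. The inner product of these two vectors is $\zeta_{k-1}x_{k-1}$ (up to conjugation), because the tails are orthogonal. Writing $T=\sum_{p\ge k}x_p^2$, the squared orthogonal component equals
\[
|\zeta_{k-1}|^2+\sum_{p\ge k}|\zeta_p|^2-\frac{x_{k-1}^2|\zeta_{k-1}|^2}{x_{k-1}^2+T}
=\frac{T|\zeta_{k-1}|^2}{x_{k-1}^2+T}+\sum_{p\ge k}|\zeta_p|^2 .
\]
This is at least $\sum_{p\ge k}|\zeta_p|^2$, which gives the second term of the lemma.

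\textbf{Step 2 (the middle indices $2\le j\le k-1$).} We use the same computation for general $j$. The squared orthogonal component is at least $T|\zeta_j|^2/(x_j^2+T)$. Since $x\in\Gamma_k$, we have $T\ge x_k^2$; indeed, $x_k\ge0$ and $T$ contains $x_k^2$. As $x_j\ge x_k$, this gives $T/(x_j^2+T)\ge x_k^2/(2x_j^2)$. Multiplying by the coefficient $x_1\cdots\hat x_j\cdots x_{k-1}=x_1\cdots x_{k-1}/x_j$ yields
\[
\frac{C\,x_1\cdots x_{k-1}x_k^2|\zeta_j|^2}{x_j^3},
\]
which is the first term of the lemma, read as $x_1\cdots x_{k-1}\,x_k^2$.

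\textbf{Main obstacle.} The delicate point is the validity of \eqref{2D-q_k} itself and its use of $s_{k-1}\le Cx_1\cdots x_{k-1}$ from Lemma~\ref{sigmak}(7) for $x\in\Gamma_k$. Some care is also needed when $x_k$ is small or negative. We will argue that $x_k>0$, or else the first term is trivially nonnegative. We will also avoid dividing by $x_j^2+T$ in degenerate cases by using the exact formula $|v|^2-|\langle v,w\rangle|^2/|w|^2$, with $w\neq0$ guaranteed by $x_{k-1}>0$. Summing the contributions of Steps 1 and 2 over $j$ and absorbing the constants gives the lemma.
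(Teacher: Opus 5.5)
Your proposal is correct and follows essentially the paper's route: both apply \eqref{2D-q_k} to $\zeta$ and compute the squared orthogonal component of $[\zeta]_{1,\ldots,\hat j,\ldots,k-1}$ as $\sum_{p\ge k}|\zeta_p|^2+\frac{T|\zeta_j|^2}{x_j^2+T}$ (the paper via the explicit orthogonal pair $\alpha_j,\beta_j$, you via the projection formula). Both then bound $\frac{T}{x_j^2+T}\ge\frac{x_k^2}{2x_j^2}$ using $T\ge x_k^2$ and $x_j\ge x_k>0$, and take the tail term from the $j=k-1$ coefficient $x_1\cdots x_{k-2}$; the only nitpick is that $x_k>0$ (automatic in $\Gamma_k$ since $x_k+\cdots+x_n>0$) is what gives $x_j^2\ge x_k^2$, not $T\ge x_k^2$, so the hedging about small or negative $x_k$ in your last paragraph is unnecessary.
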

\begin{proof}
    Let \[
\alpha_i = (x_i, \beta') = (x_i, x_k, \ldots, x_n)
    \]
and
\[
\beta_i = (|\beta'|, - \frac{x_i}{|\beta'|} \beta').
\]
We see that $\alpha_i \perp \beta_i$.
Therefore, we have
\[
(\zeta_i, \zeta_k, \ldots, \zeta_n) = (0, \zeta_k, \ldots, \zeta_n) + 
\frac{\zeta_i |\beta'|}{x_i^2 + |\beta'|^2} \beta_i + 
\frac{\zeta_i x_i }{x_i^2 + |\beta'|^2} \alpha_i.
\]
Substituting the above decomposition into \eqref{2D-q_k}, we obtain that
\[
\begin{aligned}
    - s_{k-1} \partial_{\ov \zeta} \partial_{\zeta} q_{k}
\geq &\; C\sum_{j=2}^{k-1} x_1 \cdots \hat x_j \cdots x_{k-1}
\left(\sum_{p=k}^{n} |\zeta_p|^2 + \frac{|\zeta_j|^2 |\beta'|^2}{x_j^2 + |\beta'|^2}\right)\\
\geq &\;  C\sum_{j=2}^{k-1} x_1 \cdots \hat x_j \cdots x_{k-1}
\left(\sum_{p=k}^{n} |\zeta_p|^2 + C \frac{x_k^2}{x_j^2} |\zeta_j|^2\right),
\end{aligned}
\]
which finishes the proof of the lemma.
\end{proof}

Let
 \[
F_i (\la)= \frac{\partial F (\la)}{\partial \la_i}\;
\mbox{and}\;
F_{ij} (\la) = \frac{\partial^2 F (\la)}{\partial \la_i \partial \la_j}.
\]
As in \cite{CP} and \cite{P-P-Z3}, we define the tensor
\[
\sigma_k^{p\bar q} = \frac{\partial \sigma_k}{\partial {g^r}_p} \omega^{r \bar q}\; \mbox{and}\;
\sigma_k^{p\bar q, r\bar s} = \frac{\partial^2 \sigma_k}{\partial {g^a}_p \partial {g^b}_r} \omega^{a\bar q} \omega^{b\bar s}.
\]
Then, for equation \eqref{eqn}, we can similarly define $F^{p\bar q}$,
$F^{p \overline{q}, r \overline{s}}$ and
\[
\mathcal{F}=\sum_{p} F^{p \overline{q}} \omega_{\bar q p}.
\]
At a diagonal matrix $g$ with distinct eigenvalues, we have (see \cite{J.M.B}),
\begin{equation}
	\label{symmetric func 1th deriv} 
    F^{i\ov j} =  \delta_{ij} F_i
\end{equation}
and
\begin{equation}
	\label{symmetric func 2th deriv} 
    F^{i\ov j, r\ov s} \eta_{i\ov j } \eta_{r\ov s } = \sum F_{ij} \eta_{i\ov i } \eta_{j \ov j } + \sum_{p\neq q}\frac{F_p - F_q}{\lambda_p-\lambda_q} | \eta_{p\ov q}|^2,
 \end{equation}
where $\eta_{i\ov j}$ is an arbitrary Hermitian matrix.
Note that these formulas remain well defined even when the eigenvalues are not distinct since the expressions can be interpreted in the sense of limits.

In local complex coordinates $\left(z_{1}, \ldots, z_{n}\right)$, the subscripts of a function $u$ always denote the covariant derivatives of $u$ with respect to the Hermitian metric $\omega$, taken in the directions of the local frame $\left\{\frac{\partial}{\partial z^{1}}, \ldots, \frac{\partial}{\partial z^{n}}\right\}$. That is, we write
\[
u_{i} = D_{\frac{\partial}{\partial z^i}} u, \; 
u_{i \ov{j}} = D_{\frac{\partial}{\partial \ov{z}^{j}} } D_{\frac{\partial}{\partial z^i}} u, \; 
u_{i \ov{j} l} = D_{\frac{\partial}{\partial z^l}} D_{\frac{\partial}{\partial \ov{z}^{j}}} D_{\frac{\partial}{ \partial zi}} u,
\]
where $D$ denotes the Chern connection associated with $\omega$.
We recall the following commutation formulas on Hermitian manifolds (see \cite{T-W3} for more details):
\begin{equation}\label{order}
\begin{aligned}
u_{i \ov{j} \ell} = &\; u_{i \ell \ov{j}} - u_{p} R_{\ell \ov{j} i}{}^p,\;
u_{p \ov{j} \ov{m}} =  u_{p \ov{m} \ov{j}} - \ov{T_{m j}^{q}} u_{p \ov{q}},\;
u_{i \ov{q} \ell} = u_{\ell \ov q i} - T_{\ell i}^{p} u_{p \ov{q}},\\
u_{i \ov{j} \ell \ov{m}} = &\; u_{\ell \ov{m} i \ov{j}}
+ u_{p \ov{j}} R_{\ell \ov{m} i}{}^{p}
- u_{p \ov{m}} R_{i \ov{j} \ell}{}^{p} - T_{\ell i}^{p} u_{p \ov{m} \ov{j}} - \ov{T_{m j}^{q}} u_{\ell \ov{q} i}
- T_{i \ell}^{p} \ov{T_{m j}^{q}} u_{p \ov{q}}.
\end{aligned}
\end{equation}

As usual, we define
\begin{equation}
	\begin{aligned}
		|D D u|_{F \omega}^{2} = F^{p \overline{q}} \omega^{m \overline{\ell}} u_{mp} u_{\ov \ell \ov q},\;
		|D \ov{D} u|_{F \omega}^{2} = F^{p \overline{q}} \omega^{m \overline{\ell}} u_{p\ov\ell} u_{m\ov q},
	\end{aligned}
\end{equation}
and, for any (1,0)-form $\eta = \eta_p dz^p$,
\begin{equation}
	|\eta|_{F}^{2} = F^{p \overline{q}} \eta_{p} \eta_{\overline{q}},
\end{equation} 
where $\{\omega^{m\bar l}\}$ denotes the inverse of the Hermitian metric $\omega$.

Next, we recall some basic calculations from \cite{Dong1} that will be used in the proof of Theorem \ref{thm1}. Throughout the following computations, we denote by 
$C$ a uniform constant depending only on the known data as specified in Theorem \ref{thm1}; the value of $C$ may vary from line to line.
The calculations are performed at a fixed point $p \in M$, using local complex coordinates centered at $p$, such that the Hermitian metric satisfies
\[
\omega=\sqrt{-1} \sum \delta_{k \ell} dz^{k} \wedge d \overline{z}^{\ell},
\]
and the matrix $g_{i \ov{j}}$ is diagonal at $p$.
Denote the eigenvalues of $g_{i \overline{j}}$ at this point by $\lambda_{1}, \lambda_{2}, \dots, \lambda_{n}$, arranged in descending order 
\[
\lambda_{1} \geq \lambda_{2} \geq \cdots \geq \lambda_{n}.
\]
Note that under this coordinate, the matrix $\{F^{p\ov q}\}$ is also diagonal as $\{ g_{p\ov q } \}$ is diagonal.
Differentiating equation \eqref{Sk2}, we obtain
\begin{equation}\label{differential equ}
	F^{p \overline{q}} D_{i} g_{p \overline{q}} = \psi_{z_i} + \psi_u u_i + \psi_{v_\ell} u_{\ell i} + \psi_{\ov v_\ell} u_{\ov \ell i}.
\end{equation}
Differentiating this equation a second time at the fixed point, we get
\[
	\begin{split}
		&\; F^{p \overline{q}} D_{\ov{j}} D_{i} g_{p \ov q} + F^{p \overline{q}, r \overline{s}}D_{\overline{j}} g_{r \overline{s}} D_{i} g_{p \overline{q}} = D_{\ov j}D_{i} \psi \\
		\geq & -C(|DDu|^{2} + |D \ov{D} u|^{2}) + \sum_\ell \psi_{v_\ell} g_{i \ov{j} \ell} + \sum_\ell \psi_{\ov{v}_\ell} g_{i \ov{j} \ov{\ell}} - C \lambda_{1}
	\end{split}
\]
for $\lambda_1$ sufficiently large.
By \eqref{order}, the Cauchy-Schwarz inequality,
and the above inequality, we have
\begin{equation}
	\begin{aligned}\label{dif-eqn2}
		&\; F^{p \overline{q}} D_{\overline{q}} D_{p} g_{i \overline{j}}\\
		\geq &\ -F^{p \overline{q}, r \overline{s}} D_{\ov{j}}  g_{r \overline{s}} D_{i} g_{p \overline{q}} + \sum_\ell ( \psi_{v_\ell}
		g_{i \ov{j} \ell} + \psi_{\ov{v}_\ell} g_{i \ov{j} \ov{\ell}} )\\
		&\ - F^{p \ov{q}} \big( T_{p i}^{a} u_{a \overline{q} \overline{j}} 
	    + \overline{T_{q j}^{a}} u_{p \overline{a} i} \big) 
	    - C( |DDu|^{2} + |D \ov{D}u|^{2} + {\lambda}_{1} \mathcal{F} + \lambda_{1})\\
		&\ + F^{p \overline{q}}(a_i u_{\ov j p \ov q}+a_{\ov j} u_{i p \ov q} - a_p u_{\ov q i \ov j} - a_{\ov q} u_{pi \ov j})\\
		&\ - \frac{1}{4}|D \ov{D} u|_{F \omega}^{2} - \frac{1}{4}|DDu|_{F \omega}^{2} - C\mathcal{F},
	\end{aligned}
\end{equation}
where $T$ denotes the torsion tensor associated with the Chern connection.
See inequalities (17) and (18) in \cite{Dong1}.
By direct calculation
and the differential equation \eqref{differential equ}, we obtain the following,
\begin{equation}
	\begin{aligned}\label{derivative of Du'}
		F^{p \overline{q}} ( |D u|^{2} )_{p \overline{q}}
		\geq &\; 2 \operatorname{Re}
		\big\{\sum_{p, m} \left( u_{mp} u_{\overline{p}} + u_{p} u_{m \overline{p}} \right) \psi_{v_{m}}\big\}\\
		&\; + \frac{1}{2}|D D u|_{ F \omega}^{2} + \frac{1}{2}|D \ov{D} u|_{ F \omega}^{2} - C \mathcal{F} - C.
	\end{aligned}
\end{equation}
It is also easy to see that, under Condition \eqref{con1} or \eqref{con2},
\begin{equation}
	\label{D-u}
	\begin{aligned}
		- F^{p\bar q } u_{p\bar q} 
		\geq \varepsilon \mathcal{F} + F^{p\bar q} ( a_p u_{\bar q} + a_{\bar q} u_p ) - k\psi,
	\end{aligned}
\end{equation}
since $\chi \geq \varepsilon \omega$.
For more details about the above calculations, we refer the reader to \cite{Dong1}.

\section{A Concavity Lemma}
In the following, we establish the crucial concavity inequality for the complex sum-of-Hessian operator $F$.
\begin{lemma}
	\label{Fconcavity}
	Suppose $(\la,y) \in \Gamma_k^{n+m}$ and $\la_1\geq \la_2\geq \cdots\geq \la_n \geq -\delta \lambda_1$. Then,  given $\gamma \in (0, 1)$, for sufficiently small $\delta$ and sufficiently large $\lambda_1$ depending only on $n$, $k$, $m$, and $\s_k^{(n+m)}(\la,y)$, the following inequality holds at $\lambda^{\downarrow}:$
	\begin{align}\label{key F}
		-\sum_{\substack{p \neq q}} \frac{F_{pq} \xi_p \xi_q}{F} + 2\frac{\left(\sum_{i=1}^{n} F_{i} \xi_i\right)^2}{F^2} + \sum_{i>1} \frac{F_{i}\xi_i^2}{(1+\delta)\lambda_1 F} \geq (1-\gamma)\frac{F_{1} \xi_1^2}{\lambda_1 F}   
	\end{align}
	for any vector $\xi = (\xi_1, \ldots, \xi_n) \in \mathbb{C}^{n}$.
\end{lemma}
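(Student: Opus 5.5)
Write $F(\lambda)=\sigma_k^{(n+m)}(\lambda,y)$. Then derivatives of $F$ in $\lambda$ are derivatives of $\sigma_k^{(n+m)}$ in the first $n$ variables, and everything reduces to inequalities for $s_k$ on $\Gamma_k^{n+m}$, evaluated at the vector $x=(\lambda,y)$ with test vector $\zeta=(\xi,0)\in\mathbb{C}^{n+m}$.

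\textbf{Step 1: rewrite the left-hand side through $q_k$.} I would start from the identity, for $\sigma_k=q_k\sigma_{k-1}$,
\[
-\frac{\partial_{\ov\zeta}\partial_\zeta \sigma_k}{\sigma_k}+\frac{|\partial_\zeta\sigma_k|^2}{\sigma_k^2}
= -\frac{\partial_{\ov\zeta}\partial_\zeta q_k}{q_k}+\frac{|\partial_\zeta q_k|^2}{q_k^2}
-\frac{\partial_{\ov\zeta}\partial_\zeta\sigma_{k-1}}{\sigma_{k-1}}+\frac{|\partial_\zeta\sigma_{k-1}|^2}{\sigma_{k-1}^2}.
\]
Since $F_{ii}=0$, the first term on the left of \eqref{key F} is exactly $-\partial_{\ov\xi}\partial_\xi F/F$. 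The extra $|\sum F_i\xi_i|^2/F^2$ then gives room to drop the concave pieces. Concavity of $\sigma_{k-1}^{1/(k-1)}$ in $\Gamma_{k-1}$ makes the $\sigma_{k-1}$-bracket $\geq \frac{k-2}{k-1}|\partial\sigma_{k-1}|^2/\sigma_{k-1}^2\ge0$, and $-\partial\bar\partial q_k\ge0$. So the key gain is $-s_{k-1}\partial_{\ov\zeta}\partial_\zeta q_k/s_k$, which Lemma \ref{qk-1} and \eqref{2D-q_k} control from below.

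\textbf{Step 2: structure of $\lambda$.} Under $\lambda_n\ge-\delta\lambda_1$ with $\delta$ small and $\lambda_1$ large, $\lambda_1$ dominates. From $\sigma_k$ bounded and Lemma \ref{sigmak}(7), $\lambda_1\cdots\lambda_k\le C$, so $\lambda_k$ is small. More precisely, Lemma \ref{xk} gives $\lambda_k\le C(1+\delta\lambda_1)$. The estimates I expect to use are $F_1\approx\sigma_{k-1;1}$, $\lambda_1F_1\ge \frac kn F$ from Lemma \ref{sigmak}(4), and $F_i\ge F_1$ for $i>1$ from (6).

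\textbf{Step 3: the main case split.} Decompose $\xi=\xi_1e_1+\zeta$, with $\zeta$ split into its component along $(\lambda_k,\dots,\lambda_n,y)$ and the perpendicular part, as in Lemma \ref{qk-1}.
\begin{itemize}
\item[(a)] If $|\xi_1|$ is small relative to $\sum_{i>1}\sqrt{\lambda_1F_i/F_1}\,|\xi_i|$, then $\sum_{i>1}F_i\xi_i^2/((1+\delta)\lambda_1F)$ alone beats the right side.
\item[(b)] Otherwise, write $\sum F_i\xi_i=F_1\xi_1+\sum_{i>1}F_i\xi_i$ and use $(1-\gamma)$-weighted Cauchy--Schwarz. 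We need
\[
2\frac{|F_1\xi_1|^2}{F^2}\ge \frac{F_1\xi_1^2}{\lambda_1F},
\]
which holds since $\lambda_1F_1/F\ge 1-C/\lambda_1$ (near $1$, because $\lambda_1F_1=F-F|_{\lambda_1=0}$ and $|F|_{\lambda_1=0}|$ is small relative to $F$ under the dynamic condition for small $\delta$). The cross terms $F_1\xi_1F_i\xi_i$ are absorbed by $\frac{\gamma}{\cdot}F_1^2\xi_1^2/F^2$ together with the $q_k$-gain and the last sum.
\end{itemize}

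\textbf{Main obstacle.} The hard part is the mixed terms $F_{1i}\xi_1\xi_i$ for $2\le i\le k-1$, where $F_i/(\lambda_1F)$ is not large enough. Here I would use Lemma \ref{qk-1}, whose weights $x_1\cdots x_k^2/x_j^3$ are designed for exactly this. I would also use smallness of $\delta$ to keep $\lambda_n$ and the $y$-entries (handled via $\lambda\in\Gamma_k^{(n+m)}$) from spoiling the sign of $\sigma_{k-2;1i}$. If this fails, the fallback is to treat $i\ge k$ through the orthogonal decomposition and directly bound $|F_{1i}|\le C\sigma_{k-2;1i}$ against $\sqrt{F_1F_i/\lambda_1}$.
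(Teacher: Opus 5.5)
Your reduction is exactly the paper's. It writes $F(\lambda)=\sigma_k^{(n+m)}(\lambda,y)$, tests with $(\xi,0)$, and reorders $(\lambda,y)$ decreasingly. This is legitimate once $\lambda_1$ is large, since then $\lambda_1\ge\max_i y_i$ and $y_i\ge-\delta\lambda_1$. It then applies a separate inequality for $\sigma_k$, Lemma \ref{key-c}, which carries all the substance.

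\textbf{The gap: no valid source of coercivity in $\xi_1$.} Your sketch of the $\sigma_k$ inequality never identifies where the $|\xi_1|^2$ term on the left comes from. Take $b=0$, $n>k$, and $\lambda_1\gg\lambda_2\gg\cdots\gg\lambda_k>0=\lambda_{k+1}=\cdots=\lambda_n$ with $\sigma_k$ fixed, and set $\xi=e_1-t\frac{F_1}{F_n}e_n$ with $t$ real. Then $\lambda_1F_1=F$, $F_1/F_n\approx\lambda_k/\lambda_1$, $\lambda_1F_{1n}\approx F_n$, and $\sum_iF_i\xi_i=(1-t)F_1$. Meanwhile the last sum, $t^2F_1^2/((1+\delta)\lambda_1FF_n)$, and the $q_k$-gain are both $o(F_1/(\lambda_1F))$.

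At $t=1$ you are in your case (b), since $F_n|\xi_n|^2\ll F_1|\xi_1|^2$. There the gradient term vanishes, and nothing on your list can absorb the cross term. The inequality holds along this line only because of the mixed term $-2\mathrm{Re}(F_{1n}\xi_1\bar\xi_n)/F=2tF_1F_{1n}/(F_nF)\approx 2tF_1/(\lambda_1F)$. This makes the left side about $(2t+2(1-t)^2)F_1/(\lambda_1F)\ge\frac32F_1/(\lambda_1F)$. So the $F_{1i}$ are the main positive contribution, not an error to be bounded by $|F_{1i}|$ as in your fallback.

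Your Step 1 loses the same information. Concavity of $\sigma_{k-1}^{1/(k-1)}$ gives only $-\partial_{\bar\xi}\partial_\xi\log\sigma_{k-1}\ge\frac1{k-1}|\partial_\xi\log\sigma_{k-1}|^2$, not $\frac{k-2}{k-1}$; test $\xi=\lambda$ to see this. At $t=\frac12$, all the terms you keep sum to about $(\frac1{k-1}+\frac12)F_1/(\lambda_1F)$. This is below the right side for $k\ge4$ and small $\gamma$, and Theorem \ref{thm1} needs $\gamma=1/(32\phi-4)$.

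\textbf{What the paper does instead.} It uses $|\partial\log q_k|^2+|\partial\log\sigma_k|^2\ge\frac12|\partial\log\sigma_{k-1}|^2$ and splits $-\partial_{\bar\xi}\partial_\xi\log\sigma_{k-1}$ as in \eqref{f2}.
\begin{itemize}
\item Because $\partial_1^2\sigma_{k-1}=0$, the $e_1$-block is exactly $|\sigma_{k-2;1}\xi_1/\sigma_{k-1}|^2\ge(1-C\delta)^2|\xi_1|^2/\lambda_1^2$.
\item Concavity is applied only to the $\hat\xi$-block, as in \eqref{concavity}.
\item The off-diagonal remainders $I_{1j}$ are at most $C\tilde\lambda_k^2/\tilde\lambda_j^2$ for $j<k$ and at most $C$ for $j\ge k$. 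They are absorbed by the last sum via \eqref{I-j-1}--\eqref{I-j-3}, giving \eqref{main-4}.
\end{itemize}

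\textbf{The second regime is missing entirely.} Even the paper's first argument only yields about $(1+\frac1{8k})|\xi_1|^2/\lambda_1^2$. That suffices only when $\sigma_{k;1}\ge-\frac1{16k}\sigma_k$, i.e. $\lambda_1F_1/F\le1+\frac1{16k}$. Your claim that $\lambda_1F_1/F$ is near $1$ under the dynamic condition is false. For $k=2$, $n=3$, take $\lambda=(L,\mu+\sqrt L,-\sqrt L)$ with $\mu=(L+1)/(L-\sqrt L)$. Then $\sigma_2=1$ and $\lambda_3\ge-\delta\lambda_1$ once $L\ge\delta^{-2}$, yet $\lambda_1F_1/F=L\mu\approx L$.

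In this regime the right-hand side dwarfs $|\xi_1|^2/\lambda_1^2$, and the gradient term can still be cancelled. The paper therefore needs a second argument with no counterpart in your outline:
\begin{itemize}
\item the decomposition \eqref{decomp}, with $(\zeta_k,\dots,\zeta_n)\perp(\lambda_k,\dots,\lambda_n)$;
\item the positive term $-2|a|^2\sigma_{k;1}/\sigma_k$ from $-\partial_{\bar\xi}\partial_\xi q_k/q_k$ in the $e_1$ direction;
\item Lemma \ref{qk-1} together with Claim 3, to absorb the $a\zeta$ cross terms;
\item Lemma \ref{sigmak}(3) together with Claims 1--3, to extract $-|1+a|^2(k+1-\ell)\sigma_{k;1}/\sigma_k$ from the last sum along the $\lambda$ direction, where $q_k$ degenerates.
\end{itemize}
Since $|a|^2+|1+a|^2\ge\frac12$ and, after normalizing $\lambda_1=1$, $\sigma_{k-1;1}=\sigma_k-\sigma_{k;1}$, these combine to $(1-\gamma)\sigma_{k-1;1}/\sigma_k$.

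\textbf{Two smaller slips.} The threshold in your case (a) should compare $F_1|\xi_1|^2$ with $\sum_{i>1}F_i|\xi_i|^2$, without the factor $\lambda_1$. Also, Lemma \ref{sigmak}(7) bounds $\sigma_k$ from above by $C\lambda_1\cdots\lambda_k$. It does not give $\lambda_1\cdots\lambda_k\le C$, which is false in the example just given.
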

\begin{proof}
Define
\[
	\hat{\la} = (\la_1,\ldots,\la_n,y_1,\dots,y_m)\; \mbox{and}\;
	\hat{\xi}= (\xi_1,\ldots,\xi_n,0,\ldots,0).\]
Then
\[
F(\lambda) = \sigma_k^{(n+m)} (\hat \lambda).
\]
Next, we rearrange $\hat{\lambda}$ and $\hat{\xi}$ accordingly:
\begin{align*}
	\bar{\lambda}=\hat{\lambda}^{\downarrow}
	= (\lambda_1,\hat \lambda_{\tau(2)},\ldots, \hat \lambda_{\tau(n+m)})\; \mbox{and}\;
	\bar{\xi}= (\xi_1, \hat \xi_{\tau(2)},\ldots, \hat \xi_{\tau(n+m)}),
\end{align*}
where $\tau$ is a permutation.
Since $\la_i = \hat \la_i$ for $1\leq i \leq n$, we have
\[
F_i(\la) = \sigma_{k-1;i}^{(n)} (\la)+\sum_{r=1}^m a_r \sigma_{k-r-1;i}^{(n)} (\la)
= \sigma_{k-1;i}^{(n+m)} (\hat{\la})
\]
and
\[
F_{ij} (\la) = \sigma_{k-2;ij}^{(n+m)} (\hat{\la})\; \mbox{for}\; 1 \leq i\neq j \leq n.
\]
By applying the following lemma to $\s_k^{(n+m)}(\bar \la)$ and $\bar \xi$,	
we obtain inequality \eqref{key F}.
\end{proof}

We now establish the following concavity inequality for the complex $k$-Hessian operator. The proof sketch follows the arguments in \cite{DXZ}, Lemma 3.1 (see also \cite{Zhang}, Lemma 1.1). The complex structure leads to differences in several cases. Based on the concavity of the complex Hessian quotient operator $q_k$ derived in Lemma \ref{qk-1}, we provide a proof below.
\begin{lemma}
	\label{key-c}
    Suppose $\lambda \in \Gamma_k^n$, $\la_1\geq\la_2\geq\cdots\geq \la_n$ and $\lambda_n \geq -\delta \lambda_1$ for some constant $\delta \in (0, 1)$. 
	Then, given $\gamma \in (0, 1)$, for sufficiently small $\delta$ and sufficiently large $\lambda_1$ depending only on $n$, $k$, $\gamma$, and $\sigma_k(\lambda)$, the following inequality holds at $\lambda$:
	\begin{equation}
		\label{positive-c}
		- \sum_{p\neq q} \frac{\sigma_k^{p \ov p, q \ov q} \xi_p \xi_{\ov q}}{\sigma_k} + 2 \frac{|\sum_i \sigma_k^{i \ov i} \xi_i|^2}{\sigma_k^2} 
		+ \sum_{i > 1} \frac{\sigma_k^{i \ov i} |\xi_i|^2}{(1 + \delta )\lambda_1 \sigma_k} 
		\geq (1 - \gamma) \frac{\sigma_k^{1 \ov 1} |\xi_1|^2}{\lambda_1 \sigma_k}
	\end{equation}
for any vector $\xi = (\xi_1, \ldots, \xi_n) \in \mathbb{C}^n$.
\end{lemma}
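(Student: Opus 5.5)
We will follow the scheme of \cite{DXZ}, Lemma 3.1, adapted to complex vectors through Lemma \ref{qk-1}. The starting point is the identity relating $\sigma_k$ to $q_k=\sigma_k/\sigma_{k-1}$. Write $\log\sigma_k=\log q_k+\log\sigma_{k-1}$ and differentiate twice in the direction $\xi$. This gives
\[
-\frac{\partial_{\ov\xi}\partial_\xi\sigma_k}{\sigma_k}+\frac{|\partial_\xi\sigma_k|^2}{\sigma_k^2}
=-\frac{\partial_{\ov\xi}\partial_\xi q_k}{q_k}+\frac{|\partial_\xi q_k|^2}{q_k^2}
-\frac{\partial_{\ov\xi}\partial_\xi\sigma_{k-1}}{\sigma_{k-1}}+\frac{|\partial_\xi\sigma_{k-1}|^2}{\sigma_{k-1}^2}.
\]
The left side of \eqref{positive-c} equals this expression plus one extra $|\partial_\xi\sigma_k|^2/\sigma_k^2$ and the $\sum_{i>1}$ term, because the diagonal part of the Hessian of $\sigma_k$ vanishes. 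We will use only the nonnegative contributions $-\partial_{\ov\xi}\partial_\xi q_k/q_k$ (concavity of $q_k$, Lemma \ref{q2}) and the extra $|\partial_\xi\sigma_k|^2/\sigma_k^2$. The $\sigma_{k-1}$ part is handled by an induction on $k$ of a weaker form.

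The first step is to split $\xi=\xi_1e_1+\zeta$ with $\zeta=(0,\xi_2,\ldots,\xi_n)$. We further decompose $(\xi_k,\ldots,\xi_n)$ into a component parallel to $(\lambda_k,\ldots,\lambda_n)$ and one orthogonal to it, so that the orthogonal part satisfies the hypothesis of Lemma \ref{qk-1}. We then split into two cases according to the size of $\lambda_k$ relative to $\lambda_1$.

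In the first case, $\lambda_k\le\epsilon\lambda_1$ (or $\lambda_2\le\epsilon\lambda_1$). Lemma \ref{sigmak} (4) and (7) give $\sigma_k^{1\ov1}\lambda_1\approx\sigma_k$, so $\sigma_{k-1;1}\approx\sigma_k/\lambda_1$. The term $2|\sum_i\sigma_k^{i\ov i}\xi_i|^2/\sigma_k^2$ contains $2(\sigma_k^{1\ov1})^2|\xi_1|^2/\sigma_k^2\ge 2(1-C\epsilon)\sigma_k^{1\ov1}|\xi_1|^2/(\lambda_1\sigma_k)$, which leaves an excess factor of almost $2$. We use this excess, together with Cauchy--Schwarz, to absorb the cross terms. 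These are the terms $-\sigma_k^{1\ov1,q\ov q}\xi_1\xi_{\ov q}$, of size $\sigma_{k-2;1q}|\xi_1||\xi_q|$, and the mixed terms from the squared gradient. Absorption works because $\sigma_{k-2;1q}^2\lambda_1/(\sigma_{k-1;1}\sigma_{k-1;q})$ is small in this regime, once the $\sigma_k^{q\ov q}|\xi_q|^2/\lambda_1$ terms and the $\zeta$-part of the $q_k$ concavity are also used. The negativity of $\lambda_n\ge-\delta\lambda_1$ enters only through $C\delta$ errors, which requires $\delta$ small.

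In the second case, $\lambda_k\ge\epsilon\lambda_1$, all of $\lambda_1,\ldots,\lambda_k$ are comparable. Here Lemma \ref{qk-1} applied to $\zeta^\perp$ gives
\[
-\sigma_{k-1}\partial_{\ov\zeta}\partial_\zeta q_k\ \ge\ c\,\lambda_1\cdots\lambda_{k-2}\sum_{j\ge2}|\xi_j|^2 .
\]
After dividing by $\sigma_k$, this dominates $C\sum_{j\ge2}\sigma_k^{j\ov j}|\xi_j|^2/(\lambda_1\sigma_k)$ with a large constant once $\lambda_1$ is large; the largeness of $\lambda_1$ is what makes $\sigma_k$ small relative to $\lambda_1^k$. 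The parallel component $\xi\parallel\lambda$ on the tail is controlled by the squared gradient term, since $\sum_i\sigma_k^{i\ov i}\lambda_i=k\sigma_k$ (Lemma \ref{sigmak} (2)). All cross terms with $\xi_1$ are then absorbed by Cauchy--Schwarz against these coercive terms, with loss $\gamma$.

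The main obstacle will be the complex cross terms. In the real proof of \cite{DXZ} one can complete squares using the sign of $\xi_p\xi_q$; here $\xi_p\xi_{\ov q}$ has arbitrary phase, so every mixed term must be bounded in modulus. This is exactly why the quantitative $q_k$-concavity of Lemma \ref{qk-1} is needed, and the delicate point is the intermediate regime where only some of $\lambda_2,\ldots,\lambda_{k-1}$ are comparable to $\lambda_1$. I expect to handle that regime by choosing the index $r$ with $\lambda_r\ge\epsilon_r\lambda_1>\lambda_{r+1}$, using a descending sequence $\epsilon_1\gg\epsilon_2\gg\cdots$, and applying the argument of the second case with $k$ replaced by $r$.
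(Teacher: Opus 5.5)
\textbf{There is a genuine gap: your first case contains the whole difficulty of the lemma, and its key claim is false.}

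You claim that Lemma~\ref{sigmak}~(4),~(7) give $\lambda_1\sigma_k^{1\bar1}\approx\sigma_k$ when $\lambda_k\le\epsilon\lambda_1$. But (4) is only a lower bound. By Lemma~\ref{sigmak}~(1), $\lambda_1\sigma_{k-1;1}=\sigma_k-\sigma_{k;1}$, and $\sigma_{k;1}$ can be far more negative than $-\sigma_k$. For example, take $k=2$, $n=3$, $\lambda=L(1,\theta,-\theta+\eta)$ with $0<\theta<\min(\epsilon,\delta)$ and $\eta$ fixed by $\sigma_2(\lambda)=1$. Then $\eta\approx\theta^2+L^{-2}$ and $\lambda_1\sigma_{1;1}\approx 1+\theta^2L^2\gg\sigma_2$.

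This is the paper's case $\sigma_{k;1}<-\frac{1}{16k}\sigma_k$, where Claims 1--3 give $\lambda_k\lesssim|\lambda_n|$ and $-\sigma_{k+1}\gtrsim\lambda_1\sigma_k$. There the right-hand side equals $(1-\gamma)(1-\sigma_{k;1}/\sigma_k)|\xi_1|^2/\lambda_1^2$, which is unbounded relative to $|\xi_1|^2/\lambda_1^2$.

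The ``excess'' you take from the diagonal entry $2(\sigma_k^{1\bar1})^2|\xi_1|^2/\sigma_k^2$ is illusory. The form $2|\sum_i\sigma_k^{i\bar i}\xi_i|^2/\sigma_k^2$ has rank one, and its mixed terms cannot be absorbed termwise. The decisive test is $\xi=c\lambda$. There $\partial_{\bar\xi}\partial_\xi q_k=0$ by $1$-homogeneity. The Euler identities $\sum_{p\neq q}\sigma_{k-2;pq}\lambda_p\lambda_q=k(k-1)\sigma_k$ and $\sum_i\sigma_{k-1;i}\lambda_i=k\sigma_k$ show that the first two terms of the left-hand side sum to exactly $k(k+1)|\xi_1|^2/\lambda_1^2$. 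So your assertion that the component of $\xi$ parallel to $\lambda$ is controlled by the squared gradient fails precisely where it matters.

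Two smaller points. Your second case is essentially empty: as in the paper, Lemma~\ref{xk} and $\lambda_n\ge-\delta\lambda_1$ give $\lambda_k\le C(\sigma_k^{1/k}+\delta\lambda_1)$. And the complex phases you single out as the main obstacle are secondary; the regime above is already hard for real $\xi$.

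\textbf{What is missing is a mechanism that produces a term of size $-\sigma_{k;1}/\sigma_k$.} On $\xi=c\lambda$ the only candidate is the third term. Lemma~\ref{sigmak}~(3) gives $\sum_{i>1}\sigma_{k-1;i}\lambda_i^2=\sigma_{1;1}\sigma_k-(k+1)\sigma_{k+1;1}-k\lambda_1\sigma_{k;1}$.

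This is why the paper decomposes $\xi=(1+a)\xi_1\tilde\lambda-a\xi_1e_1+\zeta$. It keeps the full $\lambda$-direction, which is the kernel of the Hessian of $q_k$, together with a free parameter $a\in\mathbb{C}$. In the normalization $\tilde\lambda=\lambda/\lambda_1$:
\begin{itemize}
\item the $e_1$-part receives $-2|a|^2(1-C\delta)\frac{\sigma_{k;1}}{\sigma_k}|\xi_1|^2$ from $-\partial_{\bar\xi}\partial_\xi q_k/q_k$;
\item the $\tilde\lambda$-part receives roughly $-(k+1-\ell)|1+a|^2\frac{\sigma_{k;1}}{\sigma_k}|\xi_1|^2$ from the third term;
\item since $|a|^2+|1+a|^2\ge\frac12$, these two together close the estimate.
\end{itemize}
Lemma~\ref{qk-1} and Claim 3 serve only to absorb the mixed terms involving $\zeta$. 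Your splitting of the tail relative to $(\lambda_k,\dots,\lambda_n)$ alone has no such mechanism.

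\textbf{You also cannot set aside $|\partial_\xi\log q_k|^2$ and the $\sigma_{k-1}$ part.} The second derivative of $-\log\sigma_{k-1}$ in the $e_1$-direction is $(\sigma_{k-2;1}/\sigma_{k-1})^2|\xi_1|^2\ge(1-C\delta)^2|\xi_1|^2/\lambda_1^2$. Together with $\frac12|\partial_\xi\log\sigma_{k-1}|^2$, this is what closes the benign case $\sigma_{k;1}\ge-\frac{1}{16k}\sigma_k$. A small example shows the failure: take $k=n=2$, $\tilde\lambda=(1,\mu)$ with $\mu$ small, and $\xi=(1,-\mu)$. Then $\partial_\xi\sigma_2=0$ and $-\partial_{\bar\xi}\partial_\xi q_2/q_2=O(\mu)$. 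So the terms you retain total $O(\mu)$, while the right-hand side is $1-\gamma$.
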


\begin{proof}
As the inequality is homogeneous of degree $-2$, it is equivalent to verify that 
	\[
		- \sum_{p\neq q} \frac{\s_k^{p \ov p,q \ov q}(\tilde{\lambda})\xi_p \xi_{\ov q}}{\s_k(\tilde{\lambda})}
		+ 2 \frac{|\sum_i \s_k^{i\ov i}(\tilde{\lambda}) \xi_i|^2}{\s_k(\tilde{\lambda})^2} 
		+ \sum_{i > 1} \frac{\s_k^{i \ov i}(\tilde{\lambda}) |\xi_i|^2}{(1 + \delta )\s_k(\tilde{\lambda})} 
		\geq (1 - \gamma) \frac{\s_k^{1\ov 1}(\tilde{\lambda}) |\xi_1|^2}{\s_k(\tilde{\lambda})},
	\]
where $\tilde{\lambda}_i=\frac{\la_i}{\la_1}$ and $\s_k(\tilde{\lambda})$ is the $k$-th elementary symmetric function of $\tilde{\lambda}$. 
Note that
\[
\tilde{\lambda}_1=1, \; \tilde{\lambda}_n\geq -\de \; \mbox{and}\; 
\s_k(\tilde{\lambda})=\frac{\s_k(\lambda)}{\la_1^k}.
\]
Define 
\[
q_k(\tilde{\lambda}) = \frac{\s_k(\tilde{\lambda})}{\s_{k-1}(\tilde{\lambda})}\; \mbox{and}\;
q_{k;i}(\tilde{\lambda}) = \frac{\s_{k;i}(\tilde{\lambda})}{\s_{k-1;i}(\tilde{\lambda})}.
\]
We first have the following
\begin{equation}
\label{main}
\begin{aligned}
&\; - \sum_{p\neq q} \frac{\s_k^{p \ov p,q \ov q} \xi_p \xi_{\ov q}}{\s_k}
		+ 2 \frac{|\sum_i \s_k^{i\ov i} \xi_i|^2}{\s_k^2}\\
        = &\; - \partial_{\ov \xi} \partial_\xi( \log q_k+\log \s_{k-1})+ |\partial_\xi \log\s_k|^2\\
	= &\; - \frac{\partial_{\ov \xi} \partial_\xi q_k}{q_k} + |\partial_\xi \log \s_k-\partial_\xi\log\s_{k-1}|^2
	- \partial_{\ov \xi} \partial_\xi \log \s_{k-1} + |\partial_\xi \log\s_k|^2\\
	\geq &\; - \frac{\partial_{\ov \xi} \partial_\xi q_k}{q_k} + \frac{1}{2} |\partial_\xi \log \s_{k-1}|^2 - \partial_{\ov \xi} \partial_\xi \log \s_{k-1},
\end{aligned}
\end{equation}
where we used the Cauchy-Schwarz inequality in the last inequality. Define
\[
 \hat \xi = (0, \xi_2, \cdots, \xi_n).
\]
Then $\partial_\xi = \xi_1 \partial_1 + \partial_{\hat \xi}$.
For the second term of \eqref{main},
\begin{equation}\label{f1}
|\partial_\xi \log \s_{k-1}|^2 = |\frac{\s_{k-2;1} \xi_1}{\s_{k-1}} + \partial_{\hat \xi} \log \s_{k-1}|^2.    
\end{equation}
For the last term of \eqref{main}
\begin{equation}\label{f2}
\begin{aligned}
 \partial_{\ov \xi} \partial_\xi \log \s_{k-1} = &\; \partial_{\ov {\hat \xi}}\partial_{\hat \xi} \log \s_{k-1} 
- \Big| \frac{\s_{k-2; 1} \xi_1}{\s_{k-1}} \Big|^2 \\
&\; + \sum_{j>1} \frac{\s_{k-1} \s_{k-3;1j} - \s_{k-2;1} \s_{k-2;j}}{\s_{k-1}^2} \xi_{\ov 1} \xi_j\\
&\; + \sum_{j>1} \frac{\s_{k-1} \s_{k-3;1j} - \s_{k-2;1} \s_{k-2;j}}{\s_{k-1}^2} \xi_1 \xi_{\ov j}.   
\end{aligned}
\end{equation}
And we note that \begin{equation}\label{concavity}
	- \partial_{\ov {\hat \xi} }\partial_{\hat \xi} \log \s_{k-1} = - \sum_{i=1}^{k-1} \frac{\partial_{\ov {\hat \xi} } \partial_{\hat \xi}q_i}{q_i} + \sum_{i=1}^{k-1} |\partial_{\hat \xi} \log q_i|^2 \geq \sum_{i=1}^{k-1} |\partial_{\hat \xi}\log q_i|^2.
	\end{equation}
By \eqref{f1}, \eqref{f2} and \eqref{concavity},
we estimate the last two terms of \eqref{main} as follows.
\begin{equation}\label{main-0}
 \begin{aligned}
&\; \frac{1}{2} |\partial_\xi \log \s_{k-1}|^2 - \partial_{\ov \xi} \partial_\xi \log \s_{k-1}\\
\geq &\; \frac{1}{k} \Big|\frac{\s_{k-2;1} \xi_1}{\s_{k-1}} \Big|^2 + \Big| \frac{\s_{k-2; 1} \xi_1}{\s_{k-1}} \Big|^2 - 2\sum_{j>1} \mbox{Re}\left(\frac{\s_{k-1} \s_{k-3;1j} - \s_{k-2;1} \s_{k-2;j}}{\s_{k-1}^2} \xi_{\ov 1} \xi_j \right),
\end{aligned}   
\end{equation}
where we use the following Cauchy-Schwarz inequality
\[\begin{aligned}
\Big|\frac{\s_{k-2;1} \xi_1}{\s_{k-1}} + \sum_{i=1}^{k-1} \partial_{\hat \xi} \log q_i\Big|^2
\geq \frac{1}{k} \Big|\frac{\s_{k-2;1} \xi_1}{\s_{k-1}} \Big|^2
 - \sum_{i=1}^{k-1} |\partial_{\hat \xi} \log q_i|^2.
\end{aligned}\]
Let us define the following notation
\[
I_{1j} : = \frac{\s_{k-1} \s_{k-3;1j} - \s_{k-2;1} \s_{k-2;j}}{\s_{k-1}^2}. 
\]
Plugging \eqref{main-0} into \eqref{main}, we obtain that
\begin{equation}
	\label{main-1}
	\begin{aligned}
		&\; - \sum_{p\neq q} \frac{\s_k^{p\ov p,q\ov q} \xi_p \xi_{\ov q}}{\s_k}
		+ 2 \frac{|\sum_i \s_k^{i\ov i} \xi_i|^2}{\s_k^2}\\
		\geq &\; - \frac{\partial_{\ov \xi} \partial_\xi q_k}{q_k} + \Big( 1 + \frac{1}{2k} \Big) \Big|\frac{\s_{k-2;1} \xi_1}{\s_{k-1}}\Big|^2 - 2 \sum_{j>1} \mbox{Re} (I_{1j} \xi_{\ov 1} \xi_j ).
	\end{aligned}
\end{equation}

Referring to the formulas (3.7)-(3.10) in \cite{DXZ}, also (3.8)-(3.14) in \cite{Zhang}), we can estimate the second term in \eqref{main-1} as  
\begin{equation}
	\label{s-k-2-1}
\frac{\s_{k-2;1}}{\s_{k-1}} = \frac{\s_{k-1} - \s_{k-1;1}}{\tilde{\lambda}_1 \s_{k-1}} \geq \frac{1 - C \delta}{\tilde{\lambda}_1};
\end{equation}
for the last term in \eqref{main-1}, we can estimate the last term in \eqref{main-1} as 
\begin{align}\label{xk'}
- 2 \mbox{Re} (I_{1j} \xi_{\ov 1} \xi_j )
\geq -2 \frac{C \tilde{\lambda}_k^2}{\tilde{\lambda}_j^2} |\xi_{\ov 1} \xi_j|
\geq - \varepsilon |\xi_1|^2 - \frac{C\tilde{\lambda}_k^4 |\xi_j|^2}{\varepsilon  \tilde{\lambda}_j^4}
\end{align}
for $1 < j \leq k-1$; 
\begin{align}\label{xk''}
  - 2 \mbox{Re} (I_{1j} \xi_{\ov 1} \xi_j )
\geq -2 C |\xi_{\ov 1} \xi_j|
\geq - \varepsilon |\xi_1|^2 - \frac{C |\xi_j|^2}{\varepsilon}
\end{align}
for $k \leq j \leq n$, where $\varepsilon > 0$ is arbitrary and $C$ depends only on $n$ and $k$. By the same arguments in \cite{DXZ,Zhang}, we note that, if $\s_{k;j}>0$ and 
$\tilde{\lambda}_j\geq (C\de)^{\frac{1}{k-1}}$, then $1\leq j\leq k-1$, and
\begin{equation}
\label{I-j-1}
\frac{C}{\tilde{\lambda}_j}\leq \frac{\s_{k-1;j}}{\s_k};
\end{equation}
if $\s_{k;j}>0$ and $\tilde{\lambda}_j< (C\de)^{\frac{1}{k-1}}$,
\begin{equation}
\label{I-j-2}
C\leq \frac{\de^{\frac{1}{k-1}}\s_{k-1;j}}{\s_k};
\end{equation}
and, if $\s_{k;j}\leq 0$, 
\begin{equation}
\label{I-j-3}
 \frac{1}{\tilde{\lambda}_j}\leq \frac{\s_{k-1;j}}{\s_k}.
\end{equation}
By Lemma \ref{xk} and the assumption that $|\tilde{\lambda}_n|<\delta$, we obtain 
\begin{align}
    \tilde{\lambda}_k \leq C \delta\leq C\delta^{\frac{1}{k-1}},
\end{align}
where we assume that $\lambda_1$ is sufficiently large depending on $\de$.
Then, inserting \eqref{I-j-1}, \eqref{I-j-2}, \eqref{I-j-3} into \eqref{xk'} and \eqref{xk''}, we deduce that
\begin{align}\label{xk'''}
 - 2 \sum_{j>1} \mbox{Re} (I_{1j} \xi_{\ov 1} \xi_j ) \geq - n \varepsilon |\xi_1|^2 - \frac{C\delta^{\frac{1}{k-1}}}{\varepsilon} 
\sum_{i > 1} \frac{\s_k^{i\ov i} |\xi_i|^2}{ \s_k}.    
\end{align}
Let $\tilde{\de} = \delta^{\frac{1}{k-1}}$.
Plugging \eqref{s-k-2-1} and \eqref{xk'''} into \eqref{main-1}, 
we arrive at the conclusion that 
\begin{equation}
	\label{main-2}
	\begin{aligned}
	&\; - \sum_{p\neq q} \frac{\s_k^{p\ov p, q\ov q} \xi_p \xi_{\ov q}}{\s_k}
	+ 2 \frac{|\sum_i \s_k^{i\ov i} \xi_i|^2}{\s_k^2} 
	+ \sum_{i > 1} \frac{\s_k^{i\ov i} |\xi_i|^2}{(1 + \delta ) \s_k} \\
	\geq &\; - \frac{\partial_{\ov \xi} \partial_\xi q_k}{q_k} + \Big( 1 + \frac{1}{2k} 
	- 2n \varepsilon \Big) \Big(1-C\delta\Big)^2 |\xi_1|^2
	+ \Big( 1 - \frac{ C\tilde \delta }{\varepsilon} \Big)
	\sum_{i > 1} \frac{\s_k^{i\ov i} |\xi_i|^2}{\s_k}.
\end{aligned}
\end{equation}
We denote by $Q$ the following quadratic form:
\[
Q:=  - \sum_{p\neq q} \frac{\s_k^{p\ov p, q\ov q} \xi_p \xi_{\ov q}}{\s_k}
+ 2 \frac{|\sum_i \s_k^{i\ov i} \xi_i|^2}{\s_k^2} 
+ \sum_{i > 1} \frac{\s_k^{i\ov i} |\xi_i|^2}{(1 + \delta ) \s_k}.
\]
We further derive from \eqref{main-2} that, for $\varepsilon = 1/8nk$, there exists a constant $C$ depending only on $n$, $k$ and $\s_k$, such that
\begin{equation}
	\label{main-3}
	\begin{aligned}
		Q \geq & - \frac{\partial_{\ov \xi} \partial_\xi q_k}{q_k} + \Big( 1 + \frac{1}{4k} \Big) \Big(1-C\delta\Big)^2 |\xi_1|^2
		+ \Big( 1 - C\tilde \delta \Big)
		\sum_{i > 1} \frac{\s_k^{i\ov i} |\xi_i|^2}{ \s_k}.
	\end{aligned}
\end{equation}
For sufficiently small $\delta$,
we have 
\begin{equation}
	\label{main-4}
	\begin{aligned}
		Q \geq - \frac{\partial_{\ov \xi} \partial_\xi q_k}{q_k} + \Big( 1 + \frac{1}{8k} \Big) |\xi_1|^2
		+ \Big( 1 - C\tilde \delta \Big)
		\sum_{i > 1} \frac{\s_k^{i\ov i} |\xi_i|^2}{\s_k}.
	\end{aligned}
\end{equation}
We now assume that
\[
\s_{k;1} \geq - \frac{1}{16k} \s_k.
\]
Then, from $\s_k = \tilde{\lambda}_1 \s_{k-1;1} + \s_{k;1}$, it follows that 
\[
1 + \frac{1}{16k}\geq\frac{\s_{k-1;1}}{\s_k}.
\]
Therefore, we obtain for sufficiently small $\tilde{\delta}$ that
\begin{equation}
	\label{main-5}
	\begin{aligned}
		Q \geq \Big( 1 + \frac{1}{1+ 16k} \Big) \frac{\s_{k}^{1\ov 1} |\xi_1|^2}{ \s_k} \geq \frac{\s_{k}^{1\ov 1} |\xi_1|^2}{ \s_k},
	\end{aligned}
\end{equation}
which proves the inequality stated in \eqref{positive-c} with any $\gamma\geq 0$.

Next, we consider the case where
\begin{equation}
	\label{sit-2}
	\s_{k;1} < - \frac{1}{16k} \s_k.
\end{equation}
To proceed, referring to the Claims in Lemma 3.1 of \cite{DXZ}, also Lemma 1.1 in \cite{Zhang}, we claim the following three facts under the assumption \eqref{sit-2}.

\textit{\textbf{Claim 1}:} We have
\begin{equation}
	\label{z-2'}
	\tilde{\lambda}_k \leq C |\tilde{\lambda}_n|
\end{equation}
for some constant $C$ depending on $n$ and $k$. 

\textit{\textbf{Claim 2}:} There exist two positive constants $c_1$ and $c_2$ only depending on $n$, $k$ and $\s_k$, such that
\begin{equation}
	\label{claim-2}
	-\s_{k+1} \geq c_1 \tilde{\lambda}_1 \cdots \tilde{\lambda}_{k-1} \tilde{\lambda}_k^2 \geq c_2\s_k.
\end{equation}

\textit{\textbf{Claim 3}:} There exist positive constants $c_3$, $c_4$ and $c_5$ only depending on $n$, $k$ and $\s_k$, such that
\begin{equation}
\label{claim-3}
c_3 \tilde{\lambda}_2 \cdots \tilde{\lambda}_k^2 \leq \s_{k-1;1} \leq c_4 \tilde{\lambda}_2 \cdots \tilde{\lambda}_k^2\leq c_5\de^2\s_{k-1}.
\end{equation}

We now continue with the proof.
Let $e_1 = (1, 0, \ldots, 0)$. 
We now decompose $\xi$ as
\begin{equation}
	\label{decomp}
\xi =(1+a)\xi_1 \tilde{\lambda}- a \xi_1 e_1 + \zeta.
\end{equation}
for some $a \in \mathbb{C}$.
Here $\zeta = (0, \zeta_2, \ldots, \zeta_n) \in \mathbb{C}^n$, which satisfies
\begin{equation}
	\label{perp}
(\zeta_k, \ldots, \zeta_n) \perp (\lambda_k, \ldots, \lambda_n).
\end{equation}
Set
\[
II_{1p}' = \frac{\s_{k-2;1p}}{\s_k} - 
\frac{\s_{k-1;p}}{\s_k} \frac{\s_{k-2;1}}{\s_{k-1}} 
- \frac{\s_{k-1;1}}{\s_k} \frac{\s_{k-2;p}}{\s_{k-1}}
+ \frac{\s_{k-2;1}}{\s_{k-1}} \frac{\s_{k-2;p}}{\s_{k-1}}
\]
and
\[
II_{1p}'' = \frac{\s_{k-2;1}}{\s_{k-1}} \frac{\s_{k-2;p}}{\s_{k-1}}
- \frac{\s_{k-3;1p}}{\s_{k-1}}.
\]
As was shown in \cite{DXZ, Zhang}, we have 
\begin{equation}
	\label{q''}
	\begin{aligned}
		-\frac{\partial_{\ov \xi} \partial_\xi q_k}{q_k} 
		= &\; - (-a\xi_1 e_1 + \zeta)_p \frac{q_k^{p\ov p, q\ov q}}{q_k} (\ov{- a\xi_1 e_1 + \zeta})_q\\
		= &\; - 2|a|^2 |\xi_1|^2 \frac{\s_{k-2;1}}{\s_{k-1}} 
		\Big(\frac{\s_{k-2;1}}{\s_{k-1}} - \frac{\s_{k-1;1}}{\s_k}\Big) \\
		&\; - \frac{\partial_{\ov \zeta} \partial_\zeta q_k}{q_k} + \ov a \sum_{p>1} \xi_{\ov1} \zeta_p II_{1p} + a \sum_{p>1} \xi_1 \zeta_{\ov p} II_{1p},
	\end{aligned}
\end{equation}
where
\[\begin{aligned}
II_{1p} 
= &\; II_{1p}' + II_{1p}''.
\end{aligned}\]
Also, by the arguments in \cite{DXZ} and \cite{Zhang} we obtain that,
for the first term in \eqref{q''},
\begin{equation}\label{1st}
    \begin{aligned}
- 2 |a|^2 |\xi_1|^2 \frac{\s_{k-2;1}}{\s_{k-1}} 
\Big(\frac{\s_{k-2;1}}{\s_{k-1}} - \frac{\s_{k-1;1}}{\s_k}\Big) 
\geq - 2 |a|^2 |\xi_1|^2\Big(1 - C\delta \Big)
\frac{ \s_{k;1}}{ \s_k};
\end{aligned}
\end{equation}
for the last two terms in \eqref{q''}, if $2\leq j\leq k-1$, then
\begin{equation}
	\label{2a}
	\begin{aligned}
		 2 \mbox{Re} (\ov a \xi_{\ov1} \zeta_i II_{1i})
		\geq &\; - 2C |\ov a \xi_{\ov1} \zeta_i|\frac{ \s_{k-1;1} }{\s_k}
	\frac{\tilde{\lambda}_k}{\tilde{\lambda}_i^2} \\
		\geq &\; - \varepsilon \frac{\s_{k-1;1} }{\s_k\tilde{\lambda}_i^3} |\zeta_i|^2 
		- \frac{C |a|^2 \tilde{\lambda}_k^2}{\varepsilon} \frac{\s_{k-1;1} |\xi_1|^2}{\s_k\tilde{\lambda}_i };
	\end{aligned}
\end{equation}
if $k \leq p \leq n$, then
\begin{equation}
	\label{2a'}
	\begin{aligned}
		 2 \mbox{Re} (\ov a \xi_{\ov 1} \zeta_p II_{1p})
		\geq &\; - 2C |\ov a \xi_{\ov 1} \zeta_p| \frac{\s_{k-1;1}}{\s_k} \frac{1}{\tilde{\lambda}_{k-1}} \\
		\geq &\; - \varepsilon \frac{\s_{k-1;1}  }{\s_k\tilde{\lambda}_{k-1} \tilde{\lambda}_k^2} |\zeta_p|^2
		- \frac{C |a|^2  \tilde{\lambda}_k^2}{\varepsilon} \frac{\s_{k-1;1} |\xi_1|^2}{\s_k\tilde{\lambda}_{k-1} }.
		\end{aligned}
\end{equation}
Substituting \eqref{1st}, \eqref{2a}, and \eqref{2a'} into \eqref{q''}, we obtain the following.
\begin{equation}
	\label{q''-1}
	\begin{aligned}
		-\frac{\partial_{\ov \xi} \partial_\xi q_k}{q_k} 
		\geq &\; - \frac{\partial_{\ov \zeta} \partial_\zeta q_k}{q_k} - 2 |a|^2 |\xi_1|^2 \Big(1 - C\delta \Big)
		\frac{ \s_{k;1}}{\s_k}\\
        &\; - \varepsilon \sum_{2 \leq i < k} \frac{\s_{k-1;1} }{\s_k\tilde{\lambda}_i^3} |\zeta_i|^2 - \varepsilon \sum_{k \leq p \leq n} \frac{\s_{k-1;1}  }{\s_k\tilde{\lambda}_{k-1} \tilde{\lambda}_k^2} |\zeta_p|^2\\
		&\; - \frac{C |a|^2 \tilde{\lambda}_k^2 }{\varepsilon} \frac{\s_{k-1;1} |\xi_1|^2 }{\s_k \tilde{\lambda}_{k-1}}  
		- \frac{C |a|^2 \tilde{\lambda}_k}{\varepsilon} \frac{\s_{k-1;1} |\xi_1|^2}{ \s_k}.
	\end{aligned}
\end{equation}
By Lemma \ref{qk-1}, there exists a positive constant $b$ depending on $n$ and $k$ such that 
\[\begin{aligned}
-\frac{\partial_{\ov \zeta} \partial_\zeta q_k}{q_k} 
\geq &\; b \sum_{2 \leq i < k} \frac{\s_{k-1;1}}{\s_k\tilde{\lambda}_i^3} |\zeta_i|^2 
+ b \sum_{k \leq p \leq n} \frac{\s_{k-1;1}}{\s_k\tilde{\lambda}_{k-1} \tilde{\lambda}_k^2} |\zeta_p|^2,
\end{aligned}\] 
where we used Claim 3 in the last inequality.
Let $ \varepsilon = b/2$. Then, it follows from \eqref{q''-1} that
\begin{equation}
	\label{q''-2}
	\begin{aligned}
		-\frac{\partial_{\ov \xi} \partial_\xi q_k}{q_k} 
		\geq &\; \frac{b}{2} \sum_{2 \leq i < k} \frac{\s_{k-1;1} }{\s_k\tilde{\lambda}_i^3} |\zeta_i|^2 + \frac{b}{2} \sum_{k \leq p \leq n} \frac{\s_{k-1;1} }{\s_k\tilde{\lambda}_{k-1} \tilde{\lambda}_k^2} |\zeta_p|^2 \\
		&\; -2 |a|^2 |\xi_1|^2 \Big(1 - C\delta \Big)
		\frac{\s_{k;1}}{ \s_k} - \frac{C |a|^2 \de}{b} \frac{\s_{k-1;1} |\xi_1|^2}{\s_k}.
	\end{aligned}
\end{equation}
Inserting \eqref{q''-2} into \eqref{main-3},
we arrive at 
\begin{equation}
	\label{main-6}
	\begin{aligned}
		Q \geq &\;\frac{b}{2} \sum_{2 \leq p < k} \frac{\s_{k-1;1} }{\s_k\tilde{\lambda}_p^3} |\zeta_p|^2 + \frac{b}{2} \sum_{k \leq p \leq n} \frac{\s_{k-1;1} }{\s_k\tilde{\lambda}_{k-1} \tilde{\lambda}_k^2} |\zeta_p|^2\\
        &\; - 2 |a|^2 \Big(1 - C\delta \Big) \frac{ \s_{k;1}}{ \s_k} |\xi_1|^2 + \Big( 1 + \frac{1}{4k} \Big) \Big(1 - C\delta\Big) |\xi_1|^2\\
		&\; - \frac{C |a|^2 \de}{b} \frac{\s_{k-1;1} }{\s_k} |\xi_1|^2
	  + \Big( 1 - C \tilde \delta \Big)
		\sum_{i > 1} \frac{\s_k^{i\ov i} |\xi_i|^2}{ \s_k}.
	\end{aligned}
\end{equation}

We now address the last term in \eqref{main-6}. \\
\textbf{Notation:} there exists some $1\leq \ell \leq k-1$, such that $\tilde{\lambda}_\ell \geq\sqrt{\de}$ and $\tilde{\lambda}_{\ell+1} < \sqrt{\de} $. 
Using \eqref{decomp} we have 
 \begin{equation}
 	\label{i>1}
 \begin{aligned}
 	\sum_{i > \ell} \frac{\s_k^{i\ov i} |\xi_i|^2}{ \s_k}
 	\geq &\; |1+a|^2 |\xi_1|^2 \sum_{i > \ell} \frac{\s_k^{i\ov i} \tilde{\lambda}_i^2}{\s_k} - 2\Big|(1+a) \xi_1 \sum_{i > \ell} \frac{\s_k^{i\ov i} \tilde{\lambda}_i \zeta_{\ov i}}{ \s_k}\Big|\\
 	\geq &\; |1+a|^2 |\xi_1|^2 \sum_{i > \ell} \frac{\s_k^{i\ov i} \tilde{\lambda}_i^2}{ \s_k} - 2\Big|(1+a) \xi_1 \sum_{k \leq p\leq n} \frac{\s_{k-2;p} \tilde{\lambda}_p^2 }{ \s_k} \zeta_{\ov p} \Big|\\
 	&\; - 2\Big|(1+a) \xi_1 \sum_{\ell < i\leq k-1} \frac{\s_{k-1;i} \tilde{\lambda}_i}{ \s_k} \zeta_{\ov i} \Big|.
 \end{aligned}
 \end{equation}
Here by \eqref{perp}, we note that
 \[
 \sum_{k\leq p \leq n} \s_{k-1;p} \tilde{\lambda}_p \zeta_{\ov p} =
 \sum_{k\leq p \leq n} (\s_{k-1} \tilde{\lambda}_p \zeta_{\ov p} - \s_{k-2;p} \tilde{\lambda}_p^2 \zeta_{\ov p})
 = - \sum_{k\leq p \leq n} \s_{k-2;p} \tilde{\lambda}_p^2 \zeta_{\ov p}.
 \]
By Lemma \ref{sigmak} (3), we obtain that
\begin{equation}
	\label{z-1'}
\begin{aligned}
	\sum_{\ell + 1 \leq j \leq n} \s_{k-1;j} \tilde{\lambda}_j^2 
	=& \sum_{\ell + 1 \leq i \leq n} \tilde{\lambda}_i \s_k - \sum_{1 \leq i \leq \ell} \s_{k+1;i}-(k+1-\ell)\s_{k+1}\\
    \geq &\; - \frac{k+1-\ell}{1 + C \delta} \frac{ \s_{k;1} }{\s_k}
	- C\sqrt{\de} \frac{\s_{k-1;1}}{ \s_k},
\end{aligned}
\end{equation}
where we use Lemma \ref{q2} (5), \eqref{z-1'} and Claim 3 in the last inequality. As was shown in \cite{DXZ, Zhang}, we observe that, for $k \leq p \leq n$, 
\begin{equation}
	\label{p>=k}
\begin{aligned}
	2\Big|(1+a) \xi_1 \frac{\s_{k-2;p} \tilde{\lambda}_p^2}{ \s_k} \zeta_{\ov p} \Big|
	\leq &\; \frac{b}{4} \frac{\s_{k-1;1} }{\s_k\tilde{\lambda}_{k-1} \tilde{\lambda}_k^2} |\zeta_p|^2 + \frac{C\de |1+a|^2}{b}\frac{ \s_{k-1;1} }{\s_k} |\xi_1|^2;
\end{aligned}
\end{equation}
for $\ell < i \leq k-1$, 
\begin{equation}
	\label{p<k-1}
\begin{aligned}
	 2\Big|(1+a) \xi_1 \frac{\s_{k-1;i} \tilde{\lambda}_i}{ \s_k} \zeta_{\ov i} \Big|
	  \leq &\; \frac{b}{4} \frac{\s_{k-1;1} |\zeta_i|^2}{\s_k\tilde{\lambda}_i^3}
	  + \frac{C \sqrt{\de} |1+a|^2 }{b}\frac{\s_{k-1;1}}{\s_k} |\xi_1|^2.	
\end{aligned}
\end{equation}
Substituting the inequalities \eqref{z-1'}, \eqref{p>=k} and \eqref{p<k-1} into \eqref{i>1}, we conclude that
\[
\begin{aligned}
	\sum_{i > \ell} \frac{\s_k^{i\ov i} |\xi_i|^2}{\s_k}
	\geq &\; - |1+a|^2 \Big(\frac{k+1 -\ell}{1 + C\delta} \frac{ \s_{k;1} }{ \s_k} |\xi_1|^2 
	+ C\sqrt{\de} \frac{\s_{k-1;1}}{\s_k} |\xi_1|^2 \Big)\\	
    &\; - \frac{b}{4} \sum_{k \leq p\leq n} \frac{\s_{k-1;1}}{\s_k\tilde{\lambda}_{k-1} \tilde{\lambda}_k^2} |\zeta_p|^2 -\frac{C \de |1+a|^2 }{b}\frac{\s_{k-1;1}}{\s_k} |\xi_1|^2\\
	&\; - \frac{b}{4} \sum_{\ell < i \leq k-1} \frac{\s_{k-1;1} }{\s_k\tilde{\lambda}_i^3} |\zeta_i|^2
	- \frac{C \sqrt{\de} |1+a|^2 }{b} \frac{\s_{k-1;1}}{\s_k} |\xi_1|^2.
\end{aligned}
\]
Denote by $\tilde a$ the quantity $|a|^2 + |1 + a|^2$. Then \eqref{main-6} reduces to:
\begin{equation}
	\label{main-7-c}
	\begin{aligned}
		Q \geq & - 2\tilde a  (1 - C \tilde \delta )
		\frac{\s_{k;1}}{\s_k} |\xi_1|^2 + ( 1-C\delta ) \Big(1 + \frac{1}{4k}\Big) |\xi_1|^2 - \frac{C \tilde a \sqrt{\de}}{b} \frac{\s_{k-1;1}}{\s_k} |\xi_1|^2\\
    \geq &  (1 - C \tilde{\delta} )
		\frac{\s_{k-1;1}}{\s_k} |\xi_1|^2 +\tilde a \left(\de^{\frac{1}{n}}- \frac{C \sqrt{\de}}{b} \right)\frac{\s_{k-1;1}}{\s_k} |\xi_1|^2,    
	\end{aligned}
\end{equation}
where we use $k + 1 - \ell \geq 2$ in the first inequality; $\tilde a \geq 1/2$ and $\s_{k-1;1} = \s_k - \s_{k;1}$ in the second inequality. Choose sufficiently small $\delta$ such that the last term of \eqref{main-7-c} is negative and $C\tilde{\de}<\gamma$.
At last, we obtain
\begin{equation}
	\label{main-10-c}
		Q \geq (1 - \gamma) \frac{\s_{k-1;1} }{ \s_k} |\xi_1|^2,
\end{equation}
which completes the proof of Lemma \ref{key-c}.
\end{proof}

\section{Proof of Theorem \ref{thm1}}

Now we begin the proof of Theorem \ref{thm1}.
We apply the maximum principle to the following test function:
\begin{equation}
	\label{test}
	G = \log \lambda_{max} + \varphi(|Du|^{2}) + \phi(u),
\end{equation}
where $\la_{max} = \max_{1\leq i \leq n}\{\la_i\}$,
\[
\varphi (t) = e^{Nt} \;\mbox{and}\;  \phi(s) = e^{ K (-s + ||u||_{C^1}+1)}.
\]
When $K \gg N \gg 1$, 
the function $\varphi$ and $\phi$ satisfy the following properties:
\begin{equation}
	\label{testfun}
	\varphi''- 2\phi'' \Big( \frac{\varphi'}{\phi'} \Big)^{2} > 0, \;\varphi' > 0, \; \phi' < 0, \; \phi''>0.
\end{equation}
We may assume that the maximum of $G$ is achieved at some point $p \in M$.
We choose a local coordinate system centered at a point $p \in M$ such that the Hermitian metric $\omega$ satisfies 
\[
\omega = \sqrt{-1} \delta_{k \ell} dz^{k} \wedge d \overline{z}^{\ell}
\]
at $p$, 
and the matrix $g_{i \ov{j}}$ of the real $(1,1)$-form $g$ is diagonal at $p$. Let $\mu$ denote the multiplicity of the largest eigenvalue of $g$.
We denote its eigenvalues by
\[
\lambda_{1} = \lambda_{2} = \cdots = \lambda_\mu > \lambda_{\mu+1} \geq \cdots \geq \lambda_{n}
\]
so that $g_{i \overline{j}} = \lambda_i \delta_{ij}$ at the point $p$.
To overcome $\la_1$ being not differentiable, we define a smooth function
$f$ on $M$ by the following equation
\[
G (p) \equiv \log f + \varphi(|Du|^{2}) + \phi(u) = : \hat G.
\]
Note that 
\[
f \geq \la_1 \;\mbox{on}\; M\; \mbox{and}\; f = \la_1 \; \mbox{at} \; p. 
\]
By Lemma 3.2 in \cite{T-W19}, for each $i$, the following formulas
\begin{equation}
    \label{f-1}
    g_{k\ov l i} = f_i g_{k\ov l}, \; \forall \; 1 \leq k,l \leq \mu,
\end{equation}
and
\begin{equation}
    \label{f-2}
    f_{i \ov i} \geq g_{1\ov 1 i \ov i} + \sum_{\ell > \mu} 
    \frac{|g_{\ell \ov 1 i}|^2 + |g_{1 \ov \ell i}|^2}{\la_1 -\la_{\ell}}
\end{equation}
holds at $p$.
Differentiate the function $\hat G$ at $p$. We get
\begin{equation}
	\label{Diff1}
	\frac{g_{1\ov 1 i}}{\lambda_1} + \varphi' \sum_l (u_l u_{\ov l i} + u_{\ov l} u_{li}) + \phi' u_i = 0,
\end{equation} 
and 
\begin{equation}
	\label{Diff2}
	0 \geq \frac{g_{1\ov 1 i \ov i}}{\lambda_1} 
	+ \sum_{\ell > \mu} \frac{|g_{\ell \ov 1 i}|^2 + |g_{1 \ov \ell i}|^2}{(\lambda_1 - \lambda_\ell) \lambda_1} - \frac{|g_{1 \ov 1 i}|^2}{\lambda_1^2} + \varphi_{i \ov i} + \phi_{i \ov i}.
\end{equation}
Contracting the above inequality with $F^{i \ov i}$, and by \eqref{dif-eqn2}, we obtain
\begin{equation}
	\label{G1}
	\begin{aligned}
		0 \geq &\;  \frac{1}{\lambda_1} \Big\{ 2 \text{Re} \sum_l ( \psi_{v_l} g_{1 \ov{1} l} )
		- F^{p \overline{q}, r \overline{s}}  g_{p \overline{q} 1} g_{r \overline{s} \ov{1}}
		 - 2\text{Re} F^{i \ov{i}} T_{i1}^{t} u_{t \overline{i} \overline{1}} \Big\}\\
		&\; + \frac{1}{\lambda_1} F^{i \overline{i}}(a_1 u_{\ov 1 i \ov i} + a_{\ov 1} u_{1 i \ov i} - a_i u_{\ov i 1 \ov 1} - a_{\ov i} u_{i 1 \ov 1}) - C \mathcal{F} - C\\
		&\; - \frac{1}{4\lambda_1} ( |D \ov{D} u|_{F \omega}^{2} + |DDu|_{F \omega}^{2} ) 
		- \frac{C}{\lambda_1} ( |DDu|^{2} + |D \ov{D}u|^{2} ) \\
		&\; + F^{i \ov i} \sum_{\ell > \mu} \frac{|g_{\ell \ov 1 i}|^2 + |g_{1 \ov \ell i}|^2}{(\lambda_1 - \lambda_\ell) \lambda_1} - \frac{F^{i \ov i} |g_{1 \ov 1 i}|^2}{\lambda_1^2} + F^{i \ov i} \varphi_{i \ov i} + F^{i \ov i} \phi_{i \ov i}.
	\end{aligned}
\end{equation} 
By \eqref{Diff1}, we see that
\[
\frac{\psi_{v_i} g_{1\ov 1 i}}{\lambda_1} + \varphi' \sum_m \psi_{v_i} (u_m u_{\ov m i} + u_{\ov m} u_{mi}) + \phi' \psi_{v_i} u_i = 0.
\]
Therefore, we obtain
\[
2 \text{Re} \frac{\psi_{v_l} g_{1\ov 1 l}}{\lambda_1}  
+ 2 \varphi' \text{Re} \sum_{m} \psi_{v_l} (u_m u_{\ov m l} + u_{\ov m} u_{ml}) \geq C \phi', 
\]
where $C$ depends on $||u||_{C^1}$ and $\psi$.
By \eqref{derivative of Du'}, \eqref{D-u} and the above inequality, we have
\[\begin{aligned}
&\; 2 \text{Re} \frac{\psi_{v_l} g_{1\ov 1 l}}{\lambda_1} 
+ F^{i \overline{i}} \varphi_{i \overline{i}}
+ F^{i \ov i} \phi_{i \ov i}\\
\geq &\; \varphi'' F^{i \ov i} |(|Du|^2)_i|^2 + \phi'' F^{i \ov i} |u_i|^2
 + \frac{\varphi'}{2} ( |D D u|_{ F\omega}^{2} + |D \ov{D} u|_{F \omega}^{2} ) \\
 &\; - \varepsilon \phi' \mathcal{F} - \phi' F^{i\bar i} ( a_i u_{\bar i} + a_{\bar i} u_i ) - C \varphi'\mathcal{F} - C\varphi' + C\phi'.
\end{aligned}\]
By the critical equation \eqref{Diff1}, we also have
\[
		\phi'^2 |u_i|^2
		\geq \frac{1}{2} \frac{| g_{1\ov 1 i}|^2}{\lambda_1^2 } - \varphi'^2 | (|Du|^2)_i |^2
\]
and
\[
	\begin{aligned}
		- 2 \phi' F^{i \ov i} \operatorname{Re} \{a_i u_{\ov i} \}
		= &\ 2 F^{i \ov i } \operatorname{Re} \left\{ a_i \Big( \frac{ g_{1 \ov 1 \ov i} }{\lambda_1} + \varphi' (|Du|^2)_{\ov i} \Big) \right\} \\
		\geq &\ - \frac{\phi''}{4 {\phi'}^2} \frac{ F^{i \ov i} |g_{1 \ov 1 \ov i} |^2 }{\lambda_1^2} -  {\varphi'}^2 \frac{\phi''}{\phi'^2} F^{i \ov i} | (|Du|^2)_i |^2 - C\frac{\phi'^2}{\phi''} \mathcal{F}.
	\end{aligned}
\]
Combining the above two inequalities and
$\varphi''- 2 \phi'' \frac{\varphi'^2}{\phi'^2} \geq 0$,
we arrive at
\begin{equation}
	\label{psi}
\begin{aligned}
	&\; 2 \text{Re} \frac{\psi_{v_l} g_{1\ov 1 l}}{\lambda_1}  
	+ F^{i \overline{i}} \varphi_{i \overline{i}}
	+ F^{i \ov i} \phi_{i \ov i}\\
	\geq &\; \frac{\phi''}{4 {\phi'}^2} \frac{F^{i \ov i} |g_{1 \ov 1 \ov i} |^2 }{\lambda_1^2}
	+ \frac{\varphi'}{2} ( |D D u|_{ F \omega}^{2} + |D \ov{D} u|_{ F \omega}^{2} ) \\
	&\; - \varepsilon \phi' \mathcal{F} - C \Big(\varphi' + \frac{\phi'^2}{\phi''} \Big) \mathcal{F} - C\varphi' + C\phi'.
\end{aligned}
\end{equation}

Now we estimate the torsion term in \eqref{G1}. For any $0 < \alpha < 1$, we have
\begin{equation}
	\label{Torsion}
	\begin{aligned}
		\frac{2 F^{i \ov i}  }{\lambda_1} \text{Re} \Big( \overline{T_{i 1}^{t}} u_{i \ov t 1} \Big)
		\leq &\; \frac{2 F^{i \ov i} }{\lambda_1} |\ov{T_{i1}^t} D_i g_{1\ov t }| + C \sum F^{i \ov i} + \frac{C}{\lambda_1} F^{i \ov i} |u_{1i}|^2\\
		\leq &\;  \frac{\alpha}{2} \frac{F^{i \ov i}}{\lambda_1^2} 
		\sum_t | g_{1\ov t i}|^2 + \frac{C}{\alpha} \sum F^{i \ov i} + \frac{C}{\lambda_1} F^{i \ov i} |u_{1i}|^2.
	\end{aligned}
\end{equation}
Similarly, we can estimate
\begin{equation}
	\label{a-3rd}
	\begin{aligned}
		&\; \frac{1}{\lambda_1} F^{i \ov i}(a_1 u_{\ov 1 i \ov i} + a_{\ov 1} u_{1 i \ov i} - a_i u_{\ov i 1 \ov 1} - a_{\ov i} u_{i 1 \ov 1}) \\
		\leq &\; \frac{\alpha}{4} \frac{ F^{i \ov i} }{\lambda_1^2}
		\Big(|D_i g_{1 \ov i }|^2 + |D_i  g_{1\ov 1 }|^2\Big)
		+ \frac{C}{\alpha} \sum F^{i \overline{i}} +\frac{C}{\lambda_1} F^{i \overline{i}} |u_{1i}|^2\\
		\leq &\; \frac{\alpha}{2} \frac{ F^{i \ov i} }{\lambda_1^2} \sum_{s} | g_{1 \ov s i}|^2 + \frac{C}{\alpha} \sum F^{i \ov i} + \frac{C}{\lambda_1} F^{i \ov i}  |u_{1i}|^2.
	\end{aligned}
\end{equation}

By Lemma \ref{sigmak} (4) and (6), we have 
\[
F^{i \overline{i}} \geq F^{1 \overline{1}} \geq \frac{1}{C \lambda_{1}}
\]
for any fixed $i$. We can estimate that
\begin{equation}
	\label{DDu}
|D D u|_{F \omega}^{2} + |D \ov{D} u|_{F \omega}^{2} \geq \frac{1}{C \lambda_{1}}\left(|D D u|^{2}+|D \overline{D} u|^{2}\right). 
\end{equation}
Substituting \eqref{psi}, \eqref{Torsion}, \eqref{a-3rd} and \eqref{DDu} into \eqref{G1} and by $\lambda_1 \gg 1$, as well as $N \gg 1$, we obtain 
\begin{equation}
	\label{G2}
	\begin{aligned}
		0 \geq &\; - \frac{ F^{p \overline{q}, r \overline{s}}  g_{p \overline{q} 1} g_{r \overline{s} \ov{1}}}{\lambda_1}
		+ \sum_{\ell > \mu} \frac{ F^{i \ov i} (|g_{\ell \ov 1 i}|^2 + |g_{1 \ov \ell i}|^2) }{( \lambda_1 - \lambda_\ell) \lambda_1^2} - \Big(1- \frac{\phi''}{4 {\phi'}^2}\Big) \frac{F^{i \ov i} |g_{1 \ov 1 i}|^2}{\lambda_1^2} \\
		&\; + \frac{\varphi'}{4} ( |DDu|_{F \omega}^{2} + |D \ov{D}u|_{F \omega}^{2} ) - \alpha \sum_{s} \frac{ F^{i \ov i} }{ \lambda_1^2 } | g_{1 \ov s i} |^2 \\
		&\; - \varepsilon \phi' \mathcal{F} - C \Big( \varphi' + \frac{\phi'^2}{\phi''} + \frac{1}{\alpha} \Big) \mathcal{F} - C\varphi' + C\phi'.
	\end{aligned}
\end{equation} 
Note that by \eqref{f-1} we have $g_{1\ov s i} = 0$ for $1 < s \leq \mu$.
We see that
\begin{equation}
	\label{G3}
	\begin{aligned}
		0 \geq &\; - \frac{ F^{p \overline{q}, r \overline{s}}  g_{p \overline{q} 1} g_{r \overline{s} \ov{1}}}{\lambda_1}
		+ \sum_{\ell > \mu} \frac{ F^{i \ov i} |g_{\ell \ov 1 i}|^2 }{ (\lambda_1 - \lambda_\ell) \lambda_1} 
		+ \Big( \frac{1}{1 + \delta} - \alpha \Big) \sum_{\ell > \mu} \frac{F^{i \ov i} |g_{1 \ov \ell i}|^2}{\lambda_1^2}\\
		&\; - \Big(1 - \frac{1}{4 \phi} + \alpha\Big) \frac{F^{i \ov i} |g_{1 \ov 1 i}|^2}{\lambda_1^2} + \frac{\varphi'}{4} ( |DDu|_{F \omega}^{2} + |D \ov{D}u|_{F \omega}^{2} ) \\
		&\; - \varepsilon \phi' \mathcal{F} - C \Big( \varphi' + \phi + \frac{1}{\alpha} \Big) \mathcal{F} - C\varphi' + C\phi'.
	\end{aligned}
\end{equation} 
Take 
\[
\alpha = \frac{1}{16\phi} \; \mbox{and}\; \delta < \frac{1}{16\phi - 1}.
\]
Note that by \eqref{symmetric func 2th deriv},
\[
- F^{p \overline{q}, r \overline{s}} g_{p \overline{q} 1} g_{r \overline{s} \ov 1} = - F^{p \overline{p}, q \overline{q}} g_{p \overline{p} 1} g_{q \overline{q} \ov 1} + F^{p \overline{p}, q \overline{q}}\left| g_{q \overline{p} 1}\right|^{2}.
\]
Adding the term $2 |\psi_1|^2/ (\la_1 F)$ to the right hand side of \eqref{G3} and by the above equation, we therefore obtain 
\begin{equation}
	\label{G4}
	\begin{aligned}
		0 \geq &\; - \frac{ F^{p \overline{p}, q \overline{q}} g_{p \overline{p} 1} g_{q \overline{q} \ov 1} }{\lambda_1} + \frac{ F^{p \overline{p}, q \overline{q}}\left| g_{q \overline{p} 1}\right|^{2}}{\lambda_1} + 2 \frac{|\sum_i F^{ii} g_{i \ov i 1}|^2}{ \lambda_1 F}\\
		&\; + \sum_{\ell > \mu} \frac{ F^{i \ov i} |g_{\ell \ov 1 i}|^2 }{ (\lambda_1 - \lambda_\ell) \lambda_1} 
		+ \Big( 1 - \frac{1}{8\phi} \Big) \sum_{\ell > \mu} \frac{F^{i \ov i} |g_{1 \ov \ell i}|^2}{\lambda_1^2}\\
		&\; - \Big(1 - \frac{3}{16 \phi} \Big) \frac{F^{i \ov i} |g_{1 \ov 1 i}|^2}{\lambda_1^2} + \frac{\varphi'}{4} ( |DDu|_{F\omega}^{2} + |D \ov{D}u|_{F\omega}^{2} ) \\
		&\; - \varepsilon \phi' \mathcal{F} - C ( \varphi' + \phi ) \mathcal{F} - C\varphi' + C\phi' - C \lambda_1 - \frac{C}{\lambda_1} |DDu|^2.
	\end{aligned}
\end{equation} 
By the commutation formulae \eqref{order}, we observe that
\[
g_{1 \ov \ell i} - g_{i \ov \ell 1} = - T_{i1}^p u_{p \ov \ell} + H_{i \ov \ell 1},
\]
where
\[
H_{i \ov \ell 1} = D_i (\chi_{1 \ov \ell } + a_1 u_{\ov \ell} + a_{\ov \ell} u_1) - D_1 (\chi_{i \ov \ell } + a_i u_{\ov \ell } + a_{\ov \ell} u_i).
\]
Since $u_{1i} - u_{i1} = - T_{1i}^p u_p \leq C$,
we have 
\[
|H_{i \ov \ell 1}| \leq C + C \lambda_1.
\]
So, for any $0 < \epsilon < 1$,
\begin{equation}
	\label{H-3rd}
	|g_{1 \ov \ell i}|^2 \leq
	(1 + \epsilon) |g_{i \ov \ell 1}|^2 + \Big(\frac{1}{\epsilon} + 1\Big) \lambda_1^2.
\end{equation}
From \eqref{f-1}, we see that $g_{i\ov 1 1} = 0$ for $1 < i \leq \mu$.
Hence, by\eqref{H-3rd}, it follows that
\begin{equation}\label{g11i}
    \frac{F^{i \ov i} |g_{1 \ov 1 i}|^2}{\lambda_1^2}
    \leq \frac{F^{1 \ov 1} |g_{1 \ov 1 1}|^2}{\lambda_1^2} + ( 1 + \epsilon)
    \sum_{i > \mu } \frac{F^{i \ov i} |g_{i \ov 1 1}|^2}{\lambda_1^2}
    + \frac{C \mathcal{F}}{\epsilon}.
\end{equation}
By
\[
 \epsilon = \frac{1}{16\phi - 3} \; \mbox{and}\;
- F^{p \overline{p}, q \overline{q}} 
= \frac{F^{q \overline{q}} - F^{p \overline{p} } }{\lambda_q - \lambda_p},
\]
we have
\[
	\begin{aligned}
&\; \sum_{i > 1} F^{1 \ov 1, i \ov i} |g_{i \ov 1 1}|^2 + \sum_{\ell > \mu} \frac{ F^{1 \ov 1} |g_{ \ell \ov 1 1}|^2 }{ \lambda_1 - \lambda_\ell } - (1 + \epsilon) \Big(1 - \frac{3}{16 \phi}\Big) 
\sum_{i > \mu} \frac{F^{i \ov i} |g_{i \ov 1 1}|^2}{\lambda_1} \\
\geq &\; \sum_{i > \mu} \Big(F^{1 \ov 1, i \ov i} + \frac{ F^{1 \ov 1} }{ \lambda_1 - \lambda_i } \Big) |g_{i \ov 1 1}|^2 
- \Big(1 - \frac{1}{8 \phi}\Big) \sum_{i > \mu} \frac{F^{i \ov i} |g_{i \ov 1 1}|^2}{\lambda_1} \\
= &\; \sum_{i > \mu} \Big( \frac{1}{\lambda_1 - \lambda_i} - (1 - \frac{1}{8 \phi} ) \frac{1}{\lambda_1} \Big) F^{i \ov i} |g_{i \ov 1 1}|^2
\geq 0.
\end{aligned}
\]
Substituting \eqref{g11i} into \eqref{G4} and by the above inequality, we obtain the following,
\begin{equation}
	\label{G5}
	\begin{aligned}
		0 \geq &\; - \frac{ F^{p \overline{p}, q \overline{q}} g_{p \overline{p} 1} g_{q \overline{q} \ov 1} }{\lambda_1} + 2 \frac{|\sum_i F^{ii} g_{i \ov i 1}|^2}{ \lambda_1 F} + \Big( 1 -  \frac{1}{8 \phi} \Big) \sum_{i > \mu} \frac{F^{i \ov i} |g_{1 \ov i i}|^2}{\lambda_1^2}\\
		&\; - \Big(1 - \frac{3}{16 \phi}\Big) \frac{F^{1 \ov 1} |g_{1 \ov 1 1}|^2}{\lambda_1^2} + \frac{\varphi'}{8 C\lambda_1} ( |DDu|^{2} + |D \ov{D}u|^{2} ) \\
		&\; - \varepsilon \phi' \mathcal{F} - C ( \varphi' + \phi ) \mathcal{F} - C (\varphi' - \phi' +  \lambda_1) - \frac{C }{\lambda_1} |DDu|^2.
	\end{aligned}
\end{equation} 
Similarly to \eqref{H-3rd}, by the Cauchy-Schwarz inequality, we observe that
\begin{equation}
\label{g1ii}
    |g_{1\ov i i}|^2 \geq (1 - \epsilon') |g_{i \ov i 1}|^2 - \frac{C}{\epsilon'} \la_1^2. 
\end{equation}
From \eqref{G5} and \eqref{g1ii}, 
we derive the following,
\begin{equation}
	\label{G6}
	\begin{aligned}
		0 \geq &\; - \frac{ F^{p \overline{p}, q \overline{q}} g_{p \overline{p} 1} g_{q \overline{q} \ov 1} }{\lambda_1} + 2 \frac{|\sum_i F^{ii} g_{i \ov i 1}|^2}{ \lambda_1 F} 
		- \Big(1 - \frac{3}{16 \phi}\Big) \frac{F^{1 \ov 1} |g_{1 \ov 1 1}|^2}{\lambda_1^2}\\
		&\; + (1 - \epsilon') \Big( 1 -  \frac{1}{8 \phi} \Big) 
        \sum_{i > \mu} \frac{F^{i \ov i} |g_{i \ov i 1}|^2}{\lambda_1^2}
		 + \frac{\varphi'}{8 C\lambda_1} ( |DDu|^{2} + |D \ov{D}u|^{2} ) \\
		&\; - \varepsilon \phi' \mathcal{F} - C \Big( \varphi' + \phi + \frac{1}{\epsilon'}\Big) \mathcal{F} - C (\varphi' - \phi' +  \lambda_1) - \frac{C }{\lambda_1} |DDu|^2.
	\end{aligned}
\end{equation} 
Take
\[
 \epsilon' = \frac{1}{32 \phi - 5} \; \mbox{and}\; 
 \gamma = \frac{1}{32\phi - 4}.
\]
Then note that
\[
1 - \frac{3}{16\phi} = (1 - \gamma)\Big(1- \frac{1}{8\phi}\Big) (1 - \epsilon').
\]
According to $g_{1\ov i i} = 0$ for $1 < i \leq \mu$,
it follows that $|g_{i \ov i 1}| \leq C \lambda_1^2$.
By Lemma \ref{key-c}, we have
\[
- \frac{F^{p \overline{p}, q \overline{q}} g_{p \overline{p} 1} g_{q \overline{q} \ov 1} }{\lambda_1} + 2 \frac{(\sum_i F^{ii} g_{i \ov i 1})^2}{ \lambda_1 F} +  \sum_{i > \mu} \frac{F^{i \ov i} |g_{i \ov i 1}|^2}{\lambda_1^2}
\geq  (1 - \gamma) \frac{F^{1 \ov 1} |g_{1 \ov 1 1}|^2}{\lambda_1^2} - C\mathcal{F},
\]
when $\delta \ll 1$.
Take $K \gg N \gg 1$ sufficiently large so that
\[
- \varepsilon \phi' \mathcal{F} - C ( \varphi' + \phi ) \mathcal{F} > 0
\]
and
\[
 \frac{\varphi'}{16 C\lambda_1} ( |DDu|^{2} + |D \ov{D}u|^{2} )
 \geq C \lambda_1 + \frac{C}{\lambda_1} |DDu|^2.
\]
Note that
\[
- F^{p \overline{p}, q \overline{q}} g_{p \overline{p} 1} g_{q \overline{q} \ov 1} +  2 \frac{|\sum_i F^{ii} g_{i \ov i 1}|^2}{ F}
> 0.
\]
By the above four inequalities, 
we finally derive from \eqref{G6} that
\begin{equation}
	\label{G7}
	\begin{aligned}
		0\geq \frac{N \varphi}{16 C} \lambda_1  - C ( \varphi' - \phi'),
	\end{aligned}
\end{equation} 
from which we derive an upper bound for $\lambda_1$. This completes the proof of Theorem \ref{thm1}.


\begin{thebibliography}{99}
	
\bibitem{J.M.B} J.M. Ball, Differentiability properties of symmetric and isotropic functions, Duke Math. J. 51 (1984), 699-728.

\bibitem{CNS} L. Caffarelli, L. Nirenberg and J. Spruck, Dirichlet problem for nonlinear second order elliptic equations III, Functions of the eigenvalues of the Hessian, Acta Math. 155 (1985), 261-301.

\bibitem{ChY} S.Y. Cheng and S.-T. Yau, On the existence of a complete K\"ahler metric on noncompact complex manifolds and the regularity of Fefferman's equation, Comm. Pure Appl. Math., 33 (1980), 507-544. 

\bibitem{Chu} J.C. Chu, A simple proof of curvature estimate for convex solution of $k$-Hessian equation, Proc. Amer. Math. Soc. 149 (2021), no. 8, 3541–3552.



\bibitem{CHZ} J.C. Chu, L.D. Huang and X.H. Zhu, The Fu-Yau equation in higher dimensions, Peking Math. J. 2 (2019), no. 1, 71–97.


\bibitem{CHZ3} J.C. Chu, L.D. Huang and X.H. Zhu, The second Hessian type equation on almost Hermitian manifolds, Front. Math. 19 (2024), no. 6, 961–988. 

\bibitem{CP} T. Collins and S. Picard, The Dirichlet problem for the $k$-Hessian equation on a complex manifold, Amer. J. Math. 144 (2022), no. 6, 1641–1680.

\bibitem{CJY}  T.C. Collins, A. Jacob and S.-T. Yau, $(1, 1)$ forms with specified Lagrangian phase: A priori estimates and algebraic obstructions, Camb. J. Math. 8 (2020), no. 2, 407-452.

\bibitem{CS} T.C. Collins and G. Sz\'ekelyhidi, Convergence of the J-flow on toric manifolds. J. Differential Geom. 107 (2017), no. 1, 47–81.


\bibitem{CY} T.C. Collins and S.-T. Yau, Moment maps, nonlinear PDE, and stability in mirror symmetry, I: geodesics, Ann. PDE 7 (2021), no. 1, Paper No. 11, 73 pp.


\bibitem{DK} S. Dinew and S. Ko\l odziej, Liouville and Calabi-Yau type theorems for complex Hessian equations, Amer. J. Math. 139 (2017), 403-415.

\bibitem{Dong1} W.S. Dong, Second order estimates for a class of complex Hessian equations on Hermitian manifolds. J. Funct. Anal. 281 (2021), no. 7, Paper No. 109121, 25 pp.

\bibitem{Dong} W.S. Dong, Second order estimates for complex Hessian equations with gradient terms on both sides, J. Differential Equations 271 (2021), 128–151. 

\bibitem{DL} W.S. Dong and C. Li, Second order estimates for complex Hessian equations on Hermitian manifolds, Discrete Contin. Dyn. Syst. 41 (2021), no. 6, 2619–2633.

\bibitem{DXZ} W.S. Dong, S.R. Xu and R.J. Zhang, Pogorelov interior estimates for general sum-type Hessian equations, arXiv:2603.15345.

\bibitem{FLM} H. Fang, M.J. Lai and X.-N. Ma,
On a class of fully nonlinear flows in K\"ahler geometry,
J. Reine Angew. Math. 653 (2011), 189-220.


\bibitem{FinoLi} A. Fino, Y.Y. Li, S. Salamon and L. Vezzoni, The Calabi-Yau equation on 4-manifolds over 2-tori, Trans. Amer. Math. Soc. 365 (2013), no. 3, 1551-1575.

\bibitem{F-Y1} J.X. Fu and S.-T. Yau, A Monge-Amp\`ere-type equation motivated by string theory, Comm. Anal. Geom. 15 (2007), 29-75.

\bibitem{F-Y2} J.X. Fu and S.-T. Yau, The theory of superstring with flux on non-K\"ahler manifolds and the complex Monge-Amp\`ere equation, J. Differential Geom. 78 (2008), 369-428.


 

\bibitem{GL} B. Guan and Q. Li, The Dirichlet problem for a complex Monge-Amp\`ere type equation on Hermitian manifolds, Adv. Math. 246 (2013), 351-367.

\bibitem{GN} B. Guan and X.L. Nie, Fully nonlinear elliptic equations with gradient terms on Hermitian manifolds,
Int. Math. Res. Not. IMRN 2023, no. 16, 14006–14042.

\bibitem{GuanSun} B. Guan and W. Sun, On a class of fully nonlinear elliptic equations on Hermitian manifolds, Calc. Var. Partial Differential Equations 54 (2015), no. 1, 901-916.



\bibitem{GRW} P.F. Guan, C.Y. Ren and Z.Z. Wang, Global $C^2$-estimates for convex solutions of curvature equations, Comm. Pure Appl. Math. 68 (2015), 1287-1325.

\bibitem{GS} B. Guo and J. Song, Sup-slopes and sub-solutions for fully nonlinear elliptic equations, arXiv:2405.03074.


\bibitem{HL} R. Harvey, H.B. Lawson, Calibrated geometries. Acta. Math., 148 (1982), 47-157.


\bibitem{HMW} Z.L. Hou, X.-N. Ma and D.M. Wu, A second order estimate for complex Hessian equations on a compact K\"ahler manifold, Math. Res. Lett. 17 (2010), 547-561.  

\bibitem{HS} G. Huisken and C. Sinestrari, Convexity estimates for mean curvature flow and singularities of mean convex surfaces, Acta. math. 183 (1999), 45-70.


\bibitem{K} N. Korevaar, A priori interior gradient bounds for solutions to elliptic Weingarten equations, Ann. Inst. H. Poincar\'e Anal. Non Lin\'eaire 4 (1987), no. 5, 405–421.



\bibitem{LYZ} C. Leung, S.-T. Yau, and E. Zaslow, From special Lagrangian to Hermitian-Yang-Mills via Fourier-Mukai transform, Winter School on Mirror Symmetry, Vector Bundles and Lagrangian Submanifolds, (1999), 209-225, AMS. IP Stud. Adv. Math., 23, Amer, Math. Soc., Providence, RI, 2001.


\bibitem{LRW} M. Li, C.Y. Ren and Z.Z. Wang,
An interior estimate for convex solutions and a rigidity theorem, J. Funct. Anal. 270 (2016), 2691-2714.


\bibitem{LiSY} S.Y. Li, On the Dirichlet problems for symmetric function equations of the eigenvalues of the complex Hessian, Asian J. Math. 8 (2004), 87-106.

\bibitem{YYLi} Y.Y. Li, Some existence results of fully nonlinear elliptic equations of Monge-Ampere type, Comm. Pure Appl. Math. 43 (1990), 233-271.

\bibitem{LTr}
 M. Lin and N.S. Trudinger, On some inequalities for elementary symmetric functions, Bull. Austral. Math. Soc. 50:2 (1994), 317–326.
 
\bibitem{LT} S.Y. Lu and Y.-L. Tsai, A simple proof of curvature estimates for the $n-1$ Hessian equation, Proc. Amer. Math. Soc. 154 (2026), no. 2, 893–904.

\bibitem{PPZ} D.H. Phong, S. Picard and X.W. Zhang, A second order estimate for general complex Hessian equations, Anal. PDE 9 (2016), 1693-1709.

\bibitem{P-P-Z3} D.H. Phong, S. Picard and X.W. Zhang, Fu-Yau Hessian equations, J. Differential Geom. 118 (2021), no. 1, 147–187.


\bibitem{RW1} C.Y. Ren and Z.Z. Wang, On the curvature estimates for Hessian equations, Amer. J. Math. 141 (2019), 1281-1315.

\bibitem{RW2} C.Y. Ren and Z.Z. Wang, The global curvature estimate for the $n-2$ Hessian equation, Calc. Var. Partial Differential Equations 62 (2023), no. 9, Paper No. 239, 50 pp.


\bibitem{SY}  R. Shankar and Y. Yuan, Hessian estimates for the sigma-2 equation in dimension four, Ann. of Math. (2), 201 (2025), no. 2, 489--513.

\bibitem{SW} J. Song and B. Weinkove,
On the convergence and singularities of the $J$-flow with applications to the Mabuchi energy, Comm. Pure Appl. Math. 61 (2008), no. 2, 210-229.

\bibitem{G.S} G. Sz\'ekelyhidi, Fully non-linear elliptic equations on compact Hermitian manifolds, J. Differential Geom. 109 (2018), 337-378.

\bibitem{STW} G. Sz\'ekelyhidi, V. Tosatti and B. Weinkove, Gauduchon metrics with prescribed volume form, Acta Math. 219 (2017), 181-211.





\bibitem{TY1} G. Tian and S.-T. Yau, Complete K\"ahler manifolds with zero Ricci curvature. I, J. Amer. Math. Soc., 3 (1990), 579-609. 

\bibitem{TY2} G. Tian and S.-T. Yau, Complete K\"ahler manifolds with zero Ricci curvature. II, Invent. Math., 106 (1991), 27-60.

\bibitem{TW} V. Tosatti and B. Weinkove, The complex Monge-Amp\`ere equation on compact Hermitian manifolds, J. Amer. Math. Soc., 23 (2010), 1187-1195.

\bibitem{T-W3} V. Tosatti and B. Weinkove, Hermitian metrics, $(n-1,n-1)$ forms and Monge-Amp\`ere equations,
J. Reine Angew. Math. 755 (2019), 67-101.

\bibitem{T-W19} V. Tosatti and B. Weinkove, The complex Monge-Amp\`ere equation with a gradient term, Pure Appl. Math. Q. 17 (2021), no. 3, 1005–1024.


\bibitem{Wang} X.-J. Wang, The $k$-Hessian equation, Geometric analysis and PDEs, Lecture Notes in Math., vol. 1977, Springer, Dordrecht, 2009, pp. 177–252.

\bibitem{Yau} S.-T. Yau, On the Ricci curvature of a compact Kähler manifold and the complex Monge-Amp\`e re equation. I, Comm. Pure Appl. Math., 31 (1978), 339-411.

\bibitem{Yuan} R.R. Yuan, On a class of fully nonlinear elliptic equations containing gradient terms on compact Hermitian manifolds, Canad. J. Math. 70 (2018), 943-960.


\bibitem{ZhangDK} D.K. Zhang, Hessian equations on closed Hermitian manifolds, Pacific J. Math. 291 (2017), 485-510.

\bibitem{Zhang} R.J. Zhang, $C^2$ estimates for $k$-Hessian equations and a rigidity theorem, Adv. Math. 480 (2025), part A, Paper No. 110488.



\end{thebibliography}
\end{document}